\newtheorem{theorem}{Theorem}
\newtheorem{proposition}[theorem]{Proposition}
\newtheorem{lemma}[theorem]{Lemma}
\newcommand{\R}{\mathbb{R}}
\newcommand{\Q}{\mathbb{Q}}
\def\<{\langle}
\def\>{\rangle}
\def\d{\partial}
\def\be{\begin{equation} }
\def\ee{\end{equation} }
\title{A class of minimal hypersurfaces in $\Q^N(c)$}
\author{C. do Rei Filho and S. Canevari}
\begin{document}

\maketitle
\begin{abstract}
We prove that a minimal hypersurfaces $f\colon M^{3} \to \Q^4(c)$ with nonzero three distinct principal curvature cannot be isometrically immersed in $\Q^4(\tilde{c}), \  \tilde{c}\neq c$. In the other cases, we present a description of such hypersurfaces. \\

\end{abstract}

\section{Introduction}

We denote by $\Q^N(c)$ a Riemannian space form of dimension $N$ and constant sectional curvature $c$. By a hypersurface $f\colon M^{n} \to \Q^{n+1}(c)$ we always mean  an isometric immersion of a \emph{Riemannian} manifold
$M^n$ of dimension $n$ into $\Q^{n+1}(c)$. 
We say that $f$ is a minimal hypersurface whose mean curvature is zero at all points in $M^n$.

One of the main purposes of  this paper is to address the following problem \\ \vspace{1ex}

\emph{For which minimal hypersurfaces $f\colon M^{n} \to \Q^{n+1}(c)$ of dimension $n\geq 2$ does there exist another isometric immersion  $\tilde{f}\colon M^{n} \to \Q^{n+1}(\tilde{c})$
with $\tilde{c}\neq c$?}\\  \vspace{1ex}

For $n=2$, Lawson \cite{La} show that given $f$ as in problem, exists a 2-parameter family the $\tilde{f}$ whose mean curvature is given by $\tilde{H}=\sqrt{c-\tilde{c}}.$ For $n \geq 4$, the problem is also solved. In fact, such a problem was studied by do Carmo and Dajczer in \cite{dcd2}, without the hypothesis that one of the immersions were minimal, and it was shown that $M^n$ is conformally flat. Already in \cite{dcd}, the same authors showed that the minimal rotation hypersurface is the only 
 minimal isometric immersions of conformally flat manifolds. In this article, we consider the problem for hypersurfaces of dimension $n = 3$.

Note that trivial solution the problem are inclusions $i\colon \Q^3(c) \to \Q^4(c)$ and $\tilde{i}\colon \Q^3(c) \to \Q^4(\tilde{c})$, $c > \tilde{c}$, umbilical non totally geodesic. 

To build more interesting solutions to the problem, we call by a \emph{$r\delta-$catenary} in $\Q^2(k)\subset \R^3_{\epsilon_0}$ with respect to a unit vector $v \in \R^3_{\epsilon_0}$ is a unit-speed curve $\gamma\colon I\to \Q^2(k)\subset \R^3_{\epsilon_0}$ such that the height function $\gamma_v=\<\gamma, v\>$ satisfies $$\gamma_v\gamma_v''+3r\gamma_v+2\gamma_v'^2-2\delta=0,$$ where $\delta \in \{-1,0,1\}$ and $r \in \R$. Here $\epsilon_0=0$ or $1$, corresponding to $k>0$ or $k<0$, respectively.


A solution the problem more interesting is built as follows: Let $f\colon M^3 \to \Q^4(c)$ be a minimal rotation hypersurface. We have (see, \cite[Corollary 4.4]{dcd}) that profile curve $\beta\colon I \to \Q^2(c)\subset \R^3_{\epsilon_0}$ of $f$ is a $c\delta-$catenary with respect to a unit vector $e_1 \in \R^3_{\epsilon_0}$. Let $\beta_1$ a height function of $\beta$ with respect to a unit vector $e_1 \in \R^3_{\epsilon_0}$. Now consider the rotation hypersurface $\tilde{f}\colon M^3 \to \Q^4(\tilde{c})$ built as in \cite{dcd}, through the function $\beta_1$. Follows that the first form of $M^3$ induced by $f$ and $\tilde{f}$ coincide (see, \cite[Corollary 3.8]{dcd}).

The following result provides the solutions the problem.

\begin{theorem}\label{main1}
Let $f:M^3 \to \Q^4(c)$ be a minimal hypersurface for which there exists an isometric immersion $\tilde{f}:M^3 \to \Q^4(\tilde{c})$ with $c \neq \tilde{c}$. Then either $f$ has zero as one of its principal curvatures or it has a principal curvature with multiplicity at least two. Moreover,
\begin{enumerate}
    \item [a)] If $f$ has a principal curvature with multiplicity three then $c>\tilde{c}$, $f$ is totally geodesic and $\tilde f$ is umbilic non totally geodesic.
    \item [b)] If $f$ has a principal curvature with multiplicity two then $f$ is a rotation hypersurface whose profile curve is a $c\delta-$catenary in a totally geodesic surface $\Q^2(c)$ of $\Q^4(c)$ and $\tilde{f}$ is a rotation hypersurface whose profile curve is a $c\tilde{\delta}-$catenary in a totally geodesic surface $\Q^2(\tilde{c})$ of $\Q^4(\tilde{c})$.
    \item [c)] If $f$ has a principal curvature zero then $c > \tilde{c}$, $f$ is a generalized cone over a minimal surface in an umbilical hypersurface $\Q^3(\bar{c})$ of $\Q^4(c)$, $\bar{c} \geq c$, and $\tilde{f}$ is a rotation hypersurface whose profile curve is a $c-$helix in a totally geodesic surface $\Q^2(\tilde{c})$ of $\Q^4(\tilde{c})$.
\end{enumerate}
\end{theorem}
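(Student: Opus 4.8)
The plan is to exploit the fact that the two immersions induce the \emph{same} metric, hence the same Riemann tensor $R$, and to play the two Gauss equations against one another. Writing $A$ and $\tilde A$ for the shape operators of $f$ and $\tilde f$ and setting $b = c - \tilde c \neq 0$, subtracting the Gauss equations eliminates $R$ and yields the purely algebraic identity
\[
\langle \tilde A X, Z\rangle \tilde A Y - \langle \tilde A Y, Z\rangle \tilde A X - \langle A X, Z\rangle A Y + \langle A Y, Z\rangle A X = b\big(\langle X, Z\rangle Y - \langle Y, Z\rangle X\big),
\]
valid at every point and for all $X,Y,Z \in T_pM$. In dimension three it is convenient to read this relation through the Hodge isomorphism $\Lambda^2 T_pM \cong T_pM$, under which $S \mapsto S\wedge S$ becomes the cofactor operator $\operatorname{cof}S$; the identity then reads $\operatorname{cof}\tilde A = \operatorname{cof}A + b\,\mathrm{Id}$. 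Since $f$ is minimal, $\operatorname{tr}A = \lambda_1+\lambda_2+\lambda_3 = 0$, so the whole theorem is a case analysis of the multiplicity pattern of $A$, and the first assertion is precisely that the pattern ``three distinct nonzero curvatures'' is impossible.

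First I would dispose of the degenerate patterns. If $A$ has a principal curvature of multiplicity three, minimality forces $A = 0$, so $f$ is totally geodesic and the identity collapses to $\operatorname{cof}\tilde A = b\,\mathrm{Id}$, i.e.\ $\tilde A\wedge\tilde A = b\,\mathrm{Id}\wedge\mathrm{Id}$; this is the Gauss equation of an umbilic hypersurface and forces $\tilde A = \sqrt{b}\,\mathrm{Id}$, which requires $b>0$, that is $c>\tilde c$, and makes $\tilde f$ umbilic non totally geodesic, giving (a). For the remaining patterns the key structural reduction is this: when the three $\lambda_i$ are distinct and nonzero, the products $\lambda_j\lambda_k$ are pairwise distinct, so $\operatorname{cof}A$ has simple spectrum with eigenvectors $e_1,e_2,e_3$; the equality $\operatorname{cof}\tilde A = \operatorname{cof}A + b\,\mathrm{Id}$ then forces $\tilde A$ to be diagonal in the \emph{same} frame, with eigenvalues obeying $\tilde\mu_j\tilde\mu_k = \lambda_j\lambda_k + b$ for each pair. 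This is what makes the two Codazzi equations comparable, since both immersions now share the principal frame $\{e_i\}$ and hence the same Levi-Civita connection coefficients $\Gamma^k_{ij}$.

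The heart of the argument, and the step I expect to be the main obstacle, is then ruling out three distinct nonzero curvatures. I would write Codazzi $(\nabla_X A)Y = (\nabla_Y A)X$ for $f$ in the frame $\{e_i\}$, obtaining $e_i(\lambda_j) = (\lambda_i-\lambda_j)\Gamma^j_{ji}$ together with $(\lambda_j-\lambda_k)\Gamma^k_{ij} = (\lambda_i-\lambda_k)\Gamma^k_{ji}$ for distinct $i,j,k$, and the identical set for $\tilde f$ with $\lambda$ replaced by $\tilde\mu$. The two off-diagonal systems form a $2\times 2$ homogeneous system in $(\Gamma^k_{ij},\Gamma^k_{ji})$ whose determinant is $(\lambda_i-\lambda_k)(\tilde\mu_j-\tilde\mu_k) - (\lambda_j-\lambda_k)(\tilde\mu_i-\tilde\mu_k)$; using $\tilde\mu_j\tilde\mu_k = \lambda_j\lambda_k+b$ and $\sum\lambda_i = 0$ I would show this determinant does not vanish on the open set of three distinct nonzero curvatures, forcing every such off-diagonal $\Gamma^k_{ij}$ to be zero. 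Differentiating the pointwise relations $\tilde\mu_j\tilde\mu_k = \lambda_j\lambda_k + b$ along the frame and substituting the Codazzi derivative formulas for $e_i(\lambda_j)$ and $e_i(\tilde\mu_j)$ then overdetermines the surviving connection coefficients, and I expect the resulting system to be inconsistent; this contradiction is the content of the first assertion. The bookkeeping is delicate precisely because $\tilde f$ is not assumed minimal, so $\sum\tilde\mu_i$ is free and one must track signs and the reality of the $\tilde\mu_i$ throughout.

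Finally, in the two surviving patterns the rotational description follows from classical structure theory. If $A$ has a principal curvature of multiplicity two, say $\lambda_1=\lambda_2=\lambda$ and $\lambda_3=-2\lambda$, the $\lambda$-eigendistribution is umbilical and, by Codazzi, its leaves are totally umbilical while the orthogonal line field integrates to the profile curve; this is exactly the situation characterizing rotation hypersurfaces, and minimality turns the profile ODE into the $c\delta$-catenary equation recalled in the introduction. The cofactor identity transports the same symmetry to $\tilde A$, yielding a rotation hypersurface $\tilde f$ with a $c\tilde\delta$-catenary profile and giving (b). If instead $A$ has a zero principal curvature while the other two are $\pm\lambda \neq 0$, the relative nullity $\ker A$ is a line field which, by Codazzi, is totally geodesic with complete leaves, so $f$ is a generalized cone over a minimal surface lying in an umbilical $\Q^3(\bar c)\subset\Q^4(c)$; analyzing $\operatorname{cof}\tilde A = \operatorname{cof}A + b\,\mathrm{Id}$ when $\lambda_2\lambda_3 = -\lambda^2 < 0$ pins down the sign $c>\tilde c$ and the $\tilde\mu_i$ corresponding to a $c$-helix profile for the rotation hypersurface $\tilde f$, giving (c).
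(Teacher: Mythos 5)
The sound part of your proposal is the pointwise algebra and the first Codazzi step. Subtracting the Gauss equations does give $\operatorname{cof}\tilde A=\operatorname{cof}A+b\,\mathrm{Id}$, and your $2\times 2$ determinant is indeed nonzero: if it vanished at a point, then $\tilde A=aA+t\,\mathrm{Id}$ there, and feeding this back into $\tilde\mu_i\tilde\mu_j=\lambda_i\lambda_j+b$ forces $a^2=1$, $t=0$, hence $b=0$. But note that this argument nowhere uses minimality and only re-proves what the paper imports from \cite{ct} as Theorem \ref{main.theo}: that $f$ is locally holonomic, i.e.\ that all $\Gamma^k_{ij}$ with $i,j,k$ distinct vanish and the principal frame comes from orthogonal coordinates. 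The entire difficulty of the theorem is what you defer with ``I would show'' and ``I expect the resulting system to be inconsistent'': ruling out three distinct \emph{nonzero} principal curvatures. That inconsistency is not a soft overdetermination phenomenon. Hypersurfaces of two space forms with three distinct principal curvatures are not ruled out in general --- this is precisely why Theorem \ref{main.theo} is stated as a correspondence rather than a non-existence result --- so the contradiction must be wrung out of the single extra scalar identity $\lambda_1+\lambda_2+\lambda_3=0$. In the paper this is the whole of Section 3: the normalization $\alpha_i=h_{ij}/(v_j\phi_j)$ of Proposition \ref{prop.key}, the closed system (\ref{red.system}), the compatibility equations $\alpha_iF_i=0$ obtained from equality of mixed second derivatives of the $\alpha_i$ (the $F_{ij}$ being the very large polynomials of the appendix), and then the four-way case analysis of Proposition \ref{propried.of.solu} on the vanishing pattern of $(\alpha_1,\alpha_2,\alpha_3)$, which needs determinant identities, the nonvanishing of $\Theta$ in (\ref{good.term}), rank arguments for the map $p\mapsto(v_1(p),v_2(p))$ into explicit algebraic curves, and Bezout's theorem. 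Your sketch contains no candidate for any of this, so the central assertion of the theorem is left unproved.

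There are also gaps in the cases you treat as routine. In (b), the cofactor identity gives only $\tilde\mu_3(\tilde\mu_1-\tilde\mu_2)=0$, and the branch $\tilde\mu_3=0$ with $\tilde\mu_1\neq\tilde\mu_2$ does not go away by symmetry: the paper excludes it (Lemma \ref{multdois}) by showing it forces $\lambda$ constant, hence $f$ isoparametric, and then invoking Cartan's identity; it must also establish $\mu\neq 0$ before Theorem 4.2 and Proposition 3.2 of \cite{dcd} can be applied to get the rotation structure and the catenary ODE for both $f$ and $\tilde f$. In (c), knowing that $\ker A$ is a totally geodesic line field only makes $f$ ruled by geodesics of $\Q^4(c)$; that $f$ is a \emph{generalized cone} over a minimal surface inside an umbilical $\Q^3(\bar c)$ is strictly stronger, and the paper obtains it by running the argument of Theorem 4 of \cite{ct}, which again uses the existence of $\tilde f$ and not Codazzi alone (your appeal to completeness of the leaves is neither available nor relevant in this local statement).
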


 By a \emph{generalized cone} over a surface 
$g\colon M^2\to \Q^3(\bar c)$ in an umbilical hypersurface $\Q^3(\bar c)$ of $\Q^{4}({c})$, $\bar c\geq c$, we mean the 
hypersurface parametrized by (the restriction to the subset of regular points of) the map $G\colon M^2\times \R\to \Q^{4}({c})$ given by $$G(x, t)=\exp_{g(x)}(t\xi(g(x))),$$ where $\xi$ is a unit normal vector field to the inclusion 
$i\colon \Q^3(\bar c)\to \Q^{4}({c})$ and $\exp$ is the exponential map of  $\Q^{4}({c})$. We say that $\gamma$ is a \emph{$c$-helix} on $\Q^2(k)$ with respect to a unit vector $v$ if $\gamma_v$ satisfies $\gamma_v''+c\gamma_v=0$. 

This article is organized as follows: In section 2, we present some results that will be useful in the other sections. In section 3, we study problem for the case of three distinct and non-zero principal curvatures and show that, in this case, there is no solution to the problem. In Section 4, we study the problem for the principal curvature case with multiplicities or zero, describe the solutions of the problem and prove the Theorem \ref{main1}. At the end of the article, there is an appendix with some extensive expressions that we have avoided leaving in the course of the previous sections.
 
\section{Preliminaries}

First we  recall the notion of holonomic hypersurfaces.  One says that a  hypersurface $f\colon M^{n} \to \Q^{n+1}(c)$  is   \emph{holonomic} if $M^n$ carries  global orthogonal coordinates  $(u_1,\ldots, u_{n})$  such that the coordinate vector fields 
$\d_j=\dfrac{\partial}{\partial u_j}$ diagonalize the second fundamental form $I\!I$ 
 of $f$. 

Set $v_j=\|\d_j\|$
 and define $V_{j} \in C^{\infty}(M)$ by 
 $II(\d_j, \d_j)=V_jv_j$, $1\leq j\leq n$. 
Then the first and second fundamental forms of $f$ are
\be\label{fundforms}
I=\sum_{i=1}^nv_i^2du_i^2\,\,\,\,\mbox{and}\,\,\,\,\,I\!I=\sum_{i=1}^nV_iv_idu_i^2.
\ee
Denote $v=(v_1,\ldots, v_n)$ and  $V=(V_{1},\ldots, V_n)$. We call  $(v,V)$ the pair associated to $f$. 
The next result is well known.

\begin{proposition}\label{prop.hol}
 The triple $(v,h,V)$, where 
 $\displaystyle h_{ij}=\frac{1}{v_i}\frac{\partial v_j}{\partial u_i},$
 satisfies the  system of PDE's
 \begin{equation}\label{sist.hol}
\left\{\begin{array}{l}
\displaystyle ({\text{i}}) \  \frac{\partial v_i}{\partial u_j}= h_{ji}v_j, \quad  ({\text{ii}}) \ \frac{\partial h_{ij}}{\partial u_i} + \frac{\partial h_{ji}}{\partial u_j} +h_{ki}h_{kj}+ V_iV_j + c \, v_iv_j=0, \vspace{.18cm}\\

\displaystyle ({\text{iii}}) \ \frac{\partial h_{ik}}{\partial u_j}=h_{ij}h_{jk}, \quad  ({\text{iv}}) \ \frac{\partial V_i}{\partial u_j}=h_{ji}V_j, \hspace{.5cm} 1\leq i\neq j\neq k\neq i \leq n.
\end{array}\right.
\end{equation}
Conversely, if $(v,h,V)$ is a solution of $(\ref{sist.hol})$ on a simply connected open subset   $U \subset \R^{n}$, with  $v_i \neq 0$ everywhere  for all $1\leq i\leq n$, then there exists a holonomic hypersurface $f\colon U \to  \R^{n+1}$  whose first and second fundamental forms are given by  $(\ref{fundforms}).$
\end{proposition}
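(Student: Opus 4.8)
The plan is to prove both directions of Proposition~\ref{prop.hol} by identifying the system \eqref{sist.hol} with the Gauss, Codazzi, and Mainardi--Codazzi equations written out in the holonomic coordinate frame, together with the compatibility (integrability) conditions for the metric connection.

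First I would set up the orthonormal frame $e_i=\frac{1}{v_i}\d_i$ adapted to the coordinates. Since the $\d_i$ diagonalize $I\!I$, the $e_i$ are principal directions and $I\!I(e_i,e_i)=V_i/v_i$ are the principal curvatures (up to the normalization built into the definition of $V_i$). I would then compute the Levi-Civita connection coefficients of the metric $I=\sum v_i^2\,du_i^2$ in this frame. For a diagonal metric the Christoffel symbols are classical, and the key observation is that the off-diagonal connection coefficients are governed precisely by the quantities $h_{ij}=\frac{1}{v_i}\frac{\d v_j}{\d u_i}$. This is the computational heart of the forward direction: equation (i) is just the defining relation for $h_{ji}$ rearranged, and equation (iii) expresses the fact that the mixed second derivatives $\frac{\d^2 v_k}{\d u_i \d u_j}$ commute — i.e. it is a symmetry/integrability statement for the $v_i$. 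I would obtain (iii) by differentiating (i) and using (i) again to eliminate first derivatives.

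Next I would extract the curvature equations. Equation (ii) should emerge from the Gauss equation: in a space form of curvature $c$ the sectional curvature of the plane spanned by $e_i,e_j$ equals $c+ V_iV_j/(v_i^2v_j^2)$ times the appropriate normalization, i.e. Gauss reads $K_{ij}^{\mathrm{intr}}=c+\kappa_i\kappa_j$ where $\kappa_i$ are the principal curvatures. Computing the intrinsic sectional curvature $K_{ij}$ of a diagonal metric in terms of the $h_{ij}$ and their $u_i,u_j$ derivatives, and matching against $c+\kappa_i\kappa_j$, yields (ii) after clearing the factors $v_iv_j$. Equation (iv) is the Codazzi equation for a hypersurface with a diagonalized second fundamental form: the Codazzi identity $(\nabla_{e_j}I\!I)(e_i,e_i)=(\nabla_{e_i}I\!I)(e_j,e_i)$ forces $\frac{\d V_i}{\d u_j}=h_{ji}V_j$, exactly parallel to how (i) arises for the first fundamental form. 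I would verify both by a direct but routine frame computation.

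For the converse, I would invoke the fundamental theorem of submanifolds (Bonnet's theorem) for hypersurfaces of space forms: on a simply connected $U$, a pair of symmetric forms $I$ (positive definite) and $I\!I$ satisfying the Gauss and Codazzi equations integrates to an isometric immersion $f\colon U\to\Q^{n+1}(c)$, unique up to rigid motion. The content of the converse is therefore just that a solution $(v,h,V)$ of \eqref{sist.hol} with all $v_i\neq0$ \emph{defines} forms \eqref{fundforms} for which Gauss and Codazzi hold — which is precisely the reverse reading of the equivalences established in the forward direction, once one also checks that $h$ is genuinely the connection data of the metric $\sum v_i^2\,du_i^2$ (this uses (i) and (iii) to guarantee that the $v_i$ assemble into a consistent metric and frame). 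I expect the main obstacle to be bookkeeping rather than conceptual: carefully tracking the factors $v_i$ and the index conventions (especially the summation on $k$ in (ii) and the non-repeated-index convention $i\neq j\neq k\neq i$ in (iii)) so that the frame-level curvature and Codazzi identities translate cleanly into the stated coordinate PDEs. Since the proposition is labeled well known, I would keep the forward verification terse and lean entirely on Bonnet's theorem for the converse, rather than reconstructing the immersion by hand.
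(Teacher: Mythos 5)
The paper offers no proof of this proposition at all --- it is quoted as ``well known'' (it goes back to \cite{ct} and to Dajczer--Tojeiro's work on holonomic hypersurfaces) --- so the only comparison available is against the standard argument, which is indeed the one you outline: the forward direction is the Gauss and Codazzi equations written out in the orthogonal coordinates, and the converse is Bonnet's fundamental theorem for hypersurfaces of space forms. Your treatment of (i) (definition of $h_{ji}$), (ii) (diagonal Gauss equation, $K_{ij}=c+\kappa_i\kappa_j$ with $\kappa_i=V_i/v_i$), (iv) (Codazzi for a diagonalized shape operator), and of the converse via Bonnet is correct in outline.

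There is, however, one genuine error: your derivation of (iii). You claim it ``expresses the fact that the mixed second derivatives $\frac{\d^2 v_k}{\d u_i\d u_j}$ commute'' and that you would obtain it by differentiating (i) and using (i) again. Carrying that out, from $\frac{\d v_k}{\d u_i}=h_{ik}v_i$ and $\frac{\d v_k}{\d u_j}=h_{jk}v_j$, equality of mixed partials gives
\begin{equation*}
v_i\left(\frac{\d h_{ik}}{\d u_j}-h_{ij}h_{jk}\right)=v_j\left(\frac{\d h_{jk}}{\d u_i}-h_{ji}h_{ik}\right),
\end{equation*}
a \emph{single} relation per triple saying that the two ``defects'' of (iii) are proportional to each other --- not that each vanishes. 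Equation (iii) consists of two independent equations per unordered triple $\{i,j,k\}$, so it cannot follow from commutation of second derivatives alone. The correct source of (iii) is again the Gauss equation: for pairwise distinct $i,j,k$ the ambient term $c\left(\<\d_i,\d_i\>\<\d_j,\d_k\>-\<\d_i,\d_k\>\<\d_j,\d_i\>\right)$ and the extrinsic term $I\!I(\d_i,\d_i)I\!I(\d_j,\d_k)-I\!I(\d_i,\d_k)I\!I(\d_j,\d_i)$ both vanish because the coordinates are orthogonal and $I\!I$ is diagonal, so the curvature components $\<R(\d_i,\d_j)\d_k,\d_i\>$ of the induced metric must vanish; expressing these components of the diagonal metric in terms of the $h_{ij}$ yields exactly (iii). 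This correction is not cosmetic for your converse either: to invoke Bonnet's theorem you must verify \emph{all} components of the Gauss equation for the forms \eqref{fundforms}, and it is precisely (iii) that supplies the components with three distinct indices; under your reading of (iii) as a mere integrability triviality, those components would go unverified and the application of the fundamental theorem would be incomplete.
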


The following result characterize of hypersurfaces $f\colon M^3 \to \Q^4(c)$ with three distinct
principal curvatures does there exist another isometric immersion $\tilde{f} \colon M^3 \to \Q^4(\tilde{c})$ with $c \neq \tilde{c}$. 

\begin{theorem}[\cite{ct}] \label{main.theo} Let $f\colon\,M^3\to\Q^{4}(c)$ be a simply connected  holonomic hypersurface whose associated pair $(v, V)$ satisfies 
\be\label{main.theo.eq}
\sum_{i=1}^3\delta_iv_i^2=1, \,\,\,\,\,\,\sum_{i=1}^3\delta_iv_iV_i=0\,\,\,\,\,\mbox{and}\,\,\,\,\,\sum_{i=1}^3\delta_iV_i^2=(c-\tilde{c}), 
\ee
where $\tilde{c}\neq c$ and  $(\delta_1, \delta_2, \delta_3)= (1,-1, 1).$ Then $M^3$ admits an isometric immersion into $\mathbb{Q}^4(\tilde{c}),$ which is unique up to congruence.

Conversely, if $f\colon\,M^3\to\Q^{4}(c)$ is a hypersurface with three distinct principal curvatures
for which there exists an isometric immersion $\tilde{f}\colon\,M^3\to\Q^{4}(\tilde{c}),$ then f is locally
a holonomic hypersurface whose associated pair $(v, V )$ satisfies $(\ref{main.theo.eq}).$
\end{theorem}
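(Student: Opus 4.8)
The engine behind both implications is the comparison of the Gauss and Codazzi equations of $f$ and $\tilde f$, which induce the \emph{same} metric $I$ on $M^3$. Writing $A,\tilde A$ for the two shape operators and using that the intrinsic curvature tensor of $(M^3,I)$ is computed from either immersion, subtraction of the two Gauss equations yields the algebraic identity
\begin{equation}\label{star}
\begin{split}
&\langle \tilde AX,W\rangle\langle \tilde AY,Z\rangle-\langle \tilde AX,Z\rangle\langle \tilde AY,W\rangle-\langle AX,W\rangle\langle AY,Z\rangle+\langle AX,Z\rangle\langle AY,W\rangle\\
&\qquad=(c-\tilde c)\bigl(\langle X,W\rangle\langle Y,Z\rangle-\langle X,Z\rangle\langle Y,W\rangle\bigr),
\end{split}
\end{equation}
for all tangent fields $X,Y,Z,W$. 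Equivalently, introducing the $\R^3$-valued symmetric bilinear form $\beta(X,Y)=(\langle AX,Y\rangle,\langle\tilde AX,Y\rangle,\langle X,Y\rangle)$ and the indefinite inner product $\langle\!\langle a,b\rangle\!\rangle=a_1b_1-a_2b_2+(c-\tilde c)a_3b_3$, a direct check shows that (\ref{star}) is precisely the statement that $\beta$ is a \emph{flat} bilinear form, i.e. $\langle\!\langle\beta(X,W),\beta(Y,Z)\rangle\!\rangle=\langle\!\langle\beta(X,Z),\beta(Y,W)\rangle\!\rangle$. This reformulation is the common tool for both directions.

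For the converse, suppose $f$ has three distinct principal curvatures and diagonalize $A$ in a local orthonormal frame $e_1,e_2,e_3$ with $Ae_i=\lambda_i e_i$, the $\lambda_i$ pairwise distinct. Evaluating the flatness of $\beta$ on this frame, the off-diagonal relations $\langle\tilde Ae_i,e_i\rangle\langle\tilde Ae_j,e_k\rangle=\langle\tilde Ae_i,e_j\rangle\langle\tilde Ae_i,e_k\rangle$ alone do \emph{not} force the off-diagonal entries to vanish; combining them with the Codazzi equations of $A$ and $\tilde A$ (which control the connection coefficients in the $e_i$-frame) does, and the distinctness of the $\lambda_i$ is exactly what rules out the spurious non-diagonal solutions. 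Once $A$ and $\tilde A$ are simultaneously diagonalized, the classical argument (distinct principal curvatures plus Codazzi) shows the lines of curvature form a coordinate net, so $f$ is locally holonomic with associated pair $(v,V)$; writing $\lambda_i=V_i/v_i$, $\tilde\lambda_i=\tilde V_i/v_i$, identity (\ref{star}) becomes $\tilde V_i\tilde V_j=V_iV_j+(c-\tilde c)v_iv_j$ for $i\neq j$. After the natural normalization of the coordinates, a sign analysis of these products yields the signature $(\delta_1,\delta_2,\delta_3)=(1,-1,1)$ and the three identities (\ref{main.theo.eq}).

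For the direct statement I would invoke the converse part of Proposition \ref{prop.hol} in its form for $\Q^4(\tilde c)$ (the curvature enters only through the term $\tilde c\,v_iv_j$ in (\ref{sist.hol})(ii), so a solution of the system with parameter $\tilde c$ integrates to a hypersurface into $\Q^4(\tilde c)$). Thus it suffices to produce $\tilde V=(\tilde V_1,\tilde V_2,\tilde V_3)$ for which $(v,h,\tilde V)$ solves (\ref{sist.hol}) with $c$ replaced by $\tilde c$. Since $v$ and $h$ are inherited from $f$, equations (i) and (iii) hold unchanged; subtracting the two versions of (ii) shows that (ii) is equivalent to
\begin{equation}\label{starstar}
\tilde V_i\tilde V_j=V_iV_j+(c-\tilde c)v_iv_j,\qquad 1\le i\neq j\le 3,
\end{equation}
while (iv) is the Codazzi system $\partial\tilde V_i/\partial u_j=h_{ji}\tilde V_j$. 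Using (\ref{main.theo.eq}) one solves (\ref{starstar}) algebraically for $\tilde V$, the relations $\sum\delta_iv_i^2=1$, $\sum\delta_iv_iV_i=0$, $\sum\delta_iV_i^2=c-\tilde c$ guaranteeing real entries with a consistent choice of signs. Uniqueness up to congruence is then immediate from the fundamental theorem of hypersurfaces, since $I$ and $\sum_i\tilde V_iv_i\,du_i^2$ satisfy the Gauss and Codazzi equations of $\Q^4(\tilde c)$.

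The main obstacle in the forward direction is the verification of the Codazzi equations (\ref{sist.hol})(iv) for the algebraically defined $\tilde V$: one differentiates the constraints (\ref{main.theo.eq}) along each $\partial/\partial u_j$ and substitutes the PDE system (\ref{sist.hol}) satisfied by $(v,V)$, the point being that (\ref{main.theo.eq}) are exactly the first integrals that turn (iv) into a consequence of the corresponding equations for $V$. In the converse, the delicate step is the simultaneous diagonalizability of $A$ and $\tilde A$, where the hypothesis of three distinct principal curvatures is indispensable; its failure is precisely what separates the remaining cases treated in Theorem \ref{main1}.
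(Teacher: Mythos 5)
This theorem is not proved in the paper at all: it is imported verbatim from \cite{ct}, so the only meaningful comparison is with the argument in that reference. Your skeleton (difference of the two Gauss equations recast as flatness of the $\R^3$-valued form $\beta$, pointwise simultaneous diagonalization, passage to principal coordinates, fundamental theorem for the direct statement) does match that reference in outline, but two of your key steps are defective. First, your treatment of simultaneous diagonalization has the logic inverted: you claim that the algebraic flatness relations cannot by themselves force $\tilde A$ to be diagonal and that the Codazzi equations must be brought in. In fact the full set of flatness relations says precisely that every $2\times 2$ minor of $\tilde A$ equals the corresponding minor of $A$ plus $(c-\tilde c)$ times the corresponding minor of the identity; with $A=\mathrm{diag}(\lambda_1,\lambda_2,\lambda_3)$ this reads $\mathrm{adj}(\tilde A)=\mathrm{diag}\bigl(\lambda_2\lambda_3+c-\tilde c,\ \lambda_1\lambda_3+c-\tilde c,\ \lambda_1\lambda_2+c-\tilde c\bigr)$. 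If $\tilde A$ is invertible, a diagonal adjugate forces $\tilde A$ itself to be diagonal; if $\mathrm{rank}\,\tilde A\le 2$, then $\mathrm{adj}(\tilde A)$ has rank at most one, so two of the displayed diagonal entries vanish, which with pairwise distinct $\lambda_i$ forces some $\lambda_k=0$ and then $c=\tilde c$, a contradiction. So the diagonalization is purely algebraic, and your proposal to use Codazzi (a differential condition) to establish a pointwise algebraic fact is not a viable substitute for the computation you skipped.

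Second, and more seriously, the actual content of the converse is the existence of the special parametrization realizing (\ref{main.theo.eq}), and you dispose of it with ``after the natural normalization of the coordinates, a sign analysis yields the three identities.'' Solving the Vandermonde system $\sum_i\delta_iv_i^2\lambda_i^m=r_m$, $m=0,1,2$, the conditions (\ref{main.theo.eq}) are equivalent to prescribing $\delta_i v_i^2=\dfrac{\lambda_j\lambda_k+c-\tilde c}{(\lambda_i-\lambda_j)(\lambda_i-\lambda_k)}$ for $\{i,j,k\}=\{1,2,3\}$. Since in principal coordinates each $v_i$ may only be rescaled by a function of $u_i$ alone, one must prove that this prescribed quantity and the square of the metric coefficient have a ratio depending only on $u_i$, i.e.\ that its logarithmic derivative in each $u_j$, $j\neq i$, matches that of $v_i^2$; this verification is exactly where the Codazzi equations of \emph{both} immersions enter (differentiating the Gauss relations $\mu_i\mu_j=\lambda_i\lambda_j+c-\tilde c$ and using both Codazzi systems determines the otherwise free diagonal derivatives $\partial_j\lambda_j$, $\partial_j\mu_j$), and it is also where the signature $(1,-1,1)$ gets pinned down. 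None of this appears in your sketch. Likewise, in the direct statement the two cruxes --- that (\ref{main.theo.eq}) guarantees real, smooth, sign-consistent solutions of $\tilde V_i\tilde V_j=V_iV_j+(c-\tilde c)v_iv_j$, and that any such solution automatically satisfies equations (iv) of (\ref{sist.hol}) --- are asserted rather than proved. What you have is a correct plan of attack with the hardest steps left blank and one of them justified by an argument that does not work.
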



\section{Case nonzero distinct principal curvatures}

It will be convenient to use the following Proposition.

\begin{lemma}\label{lem.tecn.}
	Let $f\colon\,M^3\to\Q^{4}(c)$ be a simply connected holonomic hypersurface with three nonzero distinct principal curvature whose associated pair $(v, V)$ satisfies 
	\begin{equation}\label{caract.minima}
		\begin{array}{l}\displaystyle
			\sum_{i=1}^3\delta_iv_i^2=1, \ \sum_{i=1}^3\delta_iv_iV_i=0, \ \sum_{i=1}^3\delta_iV_i^2=(c-\tilde{c}) \ \mbox{and} \ \sum_{i=1}^3{\frac{V_i}{v_i}}=0
		\end{array}
	\end{equation}
	where $c \neq \tilde{c}$ and $(\delta_1,\delta_2,\delta_3)=(1,-1,1).$ Then $v_1 \neq v_3$, 
		\begin{equation}\label{V2V3}
			V_2=\dfrac{v_2(v_1^2-v_3^2)}{v_1(v_2^2+v_3^2)}V_1,\, V_3=-\dfrac{v_3(v_1^2+v_2^2)}{v_1(v_2^2+v_3^2)}V_1
		\end{equation}
		and
		\begin{equation}\label{good.term}
			\Theta:= -v_1^2 + v_1^4 + v_2^2 - 10 v_1^2 v_2^2 + 9 v_1^4 v_2^2 + v_2^4 - 9 v_1^2 v_2^4 \neq 0.
		\end{equation} 	
Moreover:
	\begin{enumerate}	
		\item[a)] If $c-\tilde{c}<0$ then $v_k\neq v_2, \, k  \in \{1,3\},$ $v_i^2-\delta_i\neq 0$ and $3v_i^2-\delta_i\neq 0.$
	
		\item[b)] If $c-\tilde{c}>0$, then $3v_k^2 - \delta_k \neq 0$ and $v_k\neq v_2, \, k  \in \{1,3\},$ in an open and dense subset of $M^3.$
		\end{enumerate}
\end{lemma}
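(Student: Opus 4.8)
The plan is to treat the four relations in \eqref{caract.minima} as algebraic constraints on $(v_1,v_2,v_3,V_1,V_2,V_3)$ and to extract everything from linear algebra plus the sign of a single scalar, isolating the only genuinely analytic point for the very end. First I would record that the principal curvatures of a holonomic hypersurface are $\lambda_i=V_i/v_i$ (immediate from \eqref{fundforms}), so the nonzero-curvature hypothesis gives $V_1\neq 0$. Next I read the second and fourth equations of \eqref{caract.minima}, namely $v_1V_1-v_2V_2+v_3V_3=0$ and $V_1/v_1+V_2/v_2+V_3/v_3=0$, as a linear system in $(V_2,V_3)$. Its coefficient determinant is $-(v_2^2+v_3^2)/(v_2v_3)$, nonzero since each $v_i\neq 0$, and Cramer's rule yields exactly \eqref{V2V3}. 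From these formulas $v_1=v_3$ would force $V_2=0$, hence $\lambda_2=0$, contradicting the hypothesis; thus $v_1\neq v_3$.

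The crucial identity comes from inserting \eqref{V2V3} into the third equation $V_1^2-V_2^2+V_3^2=c-\tilde c$ and using the first equation to eliminate $v_3^2=1-v_1^2+v_2^2$. I expect the coefficient of $V_1^2$ to collapse, after expansion, to $-\Theta/[v_1^2(v_2^2+v_3^2)^2]$, giving \[\Theta\,V_1^2=(\tilde c-c)\,v_1^2(v_2^2+v_3^2)^2.\] Since $V_1\neq 0$, $v_1\neq 0$ and $c\neq\tilde c$, the right-hand side is nonzero, so $\Theta\neq 0$; moreover this pins the sign of $\Theta$ to that of $\tilde c-c$, i.e. $\Theta>0$ when $c<\tilde c$ and $\Theta<0$ when $c>\tilde c$. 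This is the main computational step of the lemma, but it is mechanical.

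For the vanishing statements I would first dispose of $3v_i^2-\delta_i\neq 0$ using distinctness of the $\lambda_i$: from \eqref{V2V3} one computes $\lambda_2-\lambda_3=\tfrac{V_1}{v_1(v_2^2+v_3^2)}(2v_1^2+v_2^2-v_3^2)$ and $\lambda_1-\lambda_2=\tfrac{V_1}{v_1(v_2^2+v_3^2)}(2v_3^2+v_2^2-v_1^2)$, and the first equation turns the two parenthesized factors into $3v_1^2-\delta_1$ and $3v_3^2-\delta_3$; since the principal curvatures are pairwise distinct and $V_1\neq 0$, both are nonzero (everywhere, in both a) and b)). The conditions $v_k\neq v_2$ and $v_i^2-\delta_i\neq 0$ for $k,i\in\{1,3\}$ are, again via the first equation, both equivalent to $v_1\neq 1$ and $v_3\neq 1$. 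For part a) ($c<\tilde c$, so $\Theta>0$) I would rule these out by the sign test: writing $\Theta=v_1^2(v_1^2-1)+v_2^2(9v_1^2-1)(v_1^2-1)-v_2^4(9v_1^2-1)$, evaluation on $\{v_1=1\}$ gives $\Theta=-8v_2^4<0$, while on $\{v_3=1\}$ (equivalently $v_1=v_2$) it gives $\Theta=-8v_1^4<0$; both contradict $\Theta>0$, so $v_1\neq 1$ and $v_3\neq 1$ everywhere.

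The remaining case, part b) ($c>\tilde c$, so $\Theta<0$), is where the argument must soften, and I expect it to be the main obstacle: now the sign test is consistent with $v_1=1$ or $v_3=1$, so these loci cannot be excluded pointwise, only shown to be nowhere dense. My plan is to prove that the closed set $\{v_1=1\}\cup\{v_3=1\}$ has empty interior by showing that $v_1$ (resp. $v_3$) cannot be locally constant equal to $1$: on such an open set \eqref{sist.hol}(i) forces the relevant $h_{j1}$ to vanish, and substituting this back into the compatibility equations \eqref{sist.hol} together with the constraints \eqref{caract.minima} should produce a contradiction. Carrying out this contradiction cleanly, rather than merely appealing to analyticity, is the delicate point; once it is established, the complement of the bad set is open and dense, and on it all the asserted inequalities hold.
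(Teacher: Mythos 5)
Your treatment of everything except part b) is correct and is essentially the paper's own proof: \eqref{V2V3} comes from the same linear system in $(V_2,V_3)$, $v_1\neq v_3$ follows from $V_2\neq 0$, and the identity $\Theta V_1^2=(\tilde c-c)\,v_1^2(v_2^2+v_3^2)^2$ (which the paper leaves implicit in ``just substitute \eqref{V2V3} into the third equation'') yields both \eqref{good.term} and, via the sign of $\Theta$, all of part a). Your derivation of $3v_k^2-\delta_k\neq 0$ from $\lambda_2-\lambda_3\propto (3v_1^2-\delta_1)V_1$ and $\lambda_1-\lambda_2\propto (3v_3^2-\delta_3)V_1$ is the same mechanism the paper uses in case b), where $3v_k^2=\delta_k$ is shown to force $V_i/v_i=V_j/v_j$; it is fine (indeed slightly sharper) that your version gives this pointwise in both cases.

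The genuine gap is exactly where you flag it: in part b) you assert that on an open set where $v_3=1$ (equivalently $v_1=v_2$) the compatibility equations ``should produce a contradiction,'' but you never produce it, and nothing algebraic can: the four constraints \eqref{caract.minima} together with all the pointwise nondegeneracy conditions are perfectly consistent with $v_1=v_2$ when $c>\tilde c$ (take $v_1=v_2=t$, $v_3=1$, $V_3=\sqrt{(c-\tilde c)/2}$, $V_1=-\frac{1+t^2}{2t}V_3$, $V_2=\frac{1-t^2}{2t}V_3$, with $t^2\neq 1,\tfrac13$), so the contradiction must come from the PDE system, i.e.\ from using that $(v,V)$ belongs to an actual hypersurface. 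Here is how the paper closes it. On such an open set $U$ one has $v_3\equiv 1$ and, by \eqref{V2V3} and the third equation of \eqref{caract.minima}, $V_1=-\frac{1+v_1^2}{2v_1}V_3$, $V_2=\frac{1-v_1^2}{2v_1}V_3$ with $V_3\equiv\pm\sqrt{(c-\tilde c)/2}$ constant. Since $v_3$ is constant, equation (i) of \eqref{pde.sist.hol.min} gives $h_{13}=h_{23}=0$; equations (v) and (vi) with $i=3$ (the differentiated first and third constraints, which are not part of \eqref{sist.hol}) give $h_{31}v_1=h_{32}v_2$ and $h_{31}V_1=h_{32}V_2$, and since $v_1=v_2$ while $V_1-V_2=-V_3/v_1\neq 0$, this forces $h_{31}=h_{32}=0$. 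Equation (ii) with $(i,j)=(1,3)$ and $(2,3)$ then collapses to $V_1V_3+cv_1=0$ and $V_2V_3+cv_2=0$; subtracting and using $v_1=v_2$ yields $V_3(V_1-V_2)=0$, the desired contradiction. Without this (or an equivalent) computation, your part b) is a plan rather than a proof.
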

\begin{proof}
 The second and fourth equation in (\ref{caract.minima}) implies
	\begin{equation}\label{ViVj}
			\displaystyle v_i(\delta_j v_j^2-\delta_k v_k^2)V_j+v_j(\delta_i v_i^2-\delta_k v_k^2)V_i=0.
	\end{equation}
Whence follows the equations in (\ref{V2V3}). The fact that $v_1 \neq v_3$ follows from the first equation of (\ref{V2V3}). Moreover to obtain (\ref{good.term}) just substitute the expressions of \eqref{V2V3} in the third equation of (\ref{caract.minima}). 

Suppose initially that $c<\tilde{c}$. If $v_1=v_2$ then $v_3=1$ and the equations in (\ref{V2V3}) are reduced to $$V_2=\frac{v_1^2-1}{v_1^2+1}V_1\,\,\, \text{and}\,\,\, V_3=-\frac{2v_1}{v_1^2+1}V_1.$$ Substituting these expressions into the third equation of (\ref{caract.minima}) we get a contradiction. It is shown in the same way as $v_2\neq v_3$ and as $3v_i^2-\delta_i\neq 0.$ The penultimate information in item $(a)$ follows from the fact that $v_i \neq v_j$, for $i \neq j$.

Now suppose that $c>\tilde{c}$. If $3v_k^2 - \delta_k = 0$ and $k \in \{1,3\},$ follow from (\ref{V2V3}) that $$\dfrac{V_i}{v_i}=\dfrac{V_j}{v_j}=\sqrt{\dfrac{c-\tilde{c}}{2}},\, k\neq i \neq j \neq k,$$ which is a contradiction.

Finally remark that the  triple $(v,h,V),$ where  $\displaystyle h_{ij}=\frac{1}{v_i}\frac{\partial v_j}{\partial u_i},$ satisfies the system of PDE's 

	\begin{equation}\label{pde.sist.hol.min}
		\hspace{-1cm}	\left\{\begin{array}{l}
				\displaystyle ({\text{i}}) \  \frac{\partial v_i}{\partial u_j}= h_{ji}v_j, \vspace{.18cm}\\ ({\text{ii}}) \ \frac{\partial h_{ij}}{\partial u_i} + \frac{\partial h_{ji}}{\partial u_j} +h_{ki}h_{kj}+ V_iV_j+ cv_iv_j=0, \vspace{.18cm}\\
				
				\displaystyle ({\text{iii}}) \ \frac{\partial h_{ik}}{\partial u_j}=h_{ij}h_{jk}, \vspace{.18cm}\\ ({\text{iv}}) \ \frac{\partial V_i}{\partial u_j}=h_{ji}V_j, \hspace{.5cm} 1\leq i\neq j\neq k\neq i \leq 3,\vspace{.18cm}\\
				
				\displaystyle ({\text{v}}) \  \delta_i\frac{\partial v_i}{\partial u_i}+ \delta_jh_{ij}v_j +\delta_kh_{ik}v_k=0, \vspace{.18cm}\\ ({\text{vi}}) \ \delta_i\frac{\partial V_i}{\partial u_i}+ \delta_jh_{ij}V_j +\delta_kh_{ik}V_k=0,
			\end{array}\right.
	\end{equation}
	where equations (i), (ii), (iii) and (iv) follow from $f$ being a holonomic hypersurface with  $(v,V)$ as associated pair (see Proposition \ref{prop.hol}),  while (v) and (vi) follow by differentiating the first and third equation in  (\ref{caract.minima}).

Suppose there is a open $U \subset M^3$ such that $v_1=v_2.$ Then $v_3=1$ and, using (\ref{V2V3}), we can write $\displaystyle V_1=-\frac{1+v_1^2}{2v_1}V_3$ and $\displaystyle V_2=\frac{1-v_1^2}{2v_1}V_3.$ Substituting these data into the third equation of (\ref{caract.minima}), we concluded that  $\displaystyle V_3=\pm \sqrt{\frac{c-\tilde{c}}{2}}.$ Hence, it follows from equations (i), (v) and (vi) in (\ref{pde.sist.hol.min}), that  $h_{13}=h_{23}=0$ and $h_{31}=h_{32}=0$. Substituting this information into equation (ii), we get a contradiction. Likewise, it is shown that $v_3 \neq v_2$ in an open and dense subset $M^3.$
\end{proof}

\begin{proposition}\label{prop.key}
Let $f\colon\,M^3\to\Q^{4}(c)$ be a simply connected holonomic hypersurface with three nonzero distinct principal curvature whose associated pair $(v, V)$ satisfies (\ref{caract.minima}). Let $\alpha=(\alpha_1,\alpha_2,\alpha_3)$ such that
\begin{equation}\label{def.alpha}
\alpha_i=\dfrac{1}{v_j\phi_j}h_{ij}
\end{equation}
where $\displaystyle \phi_j= (\delta_j - v_j^2) (\delta_j - 3 v_j^2),$ with $1 \leq i \neq j \leq 3$. Then the pair $(v,\alpha)$ and $V_1$ satisfy the system of PDE's
\begin{equation}\label{red.system}
\left\{\begin{array}{llll}
\displaystyle \frac{\partial v_i}{\partial u_j} =  v_j v_i\phi_i \alpha_j, \\ \\
\displaystyle \frac{\partial v_i}{\partial u_i} = -\delta_i(\delta_j v_j^2\phi_j + \delta_k v_k^2 \phi_k)\alpha_i, \\ \\ \displaystyle\frac{\partial \alpha_i}{\partial u_j} = v_j(\phi_j-5\phi_k-8\delta_kv_k^2+4)\alpha_i\alpha_j, \\ \\

\displaystyle\frac{\partial \alpha_i}{\partial u_i} =\displaystyle\frac{v_i \phi_i}{2 \phi_j} (-8  \delta_i v_i^2 + 8 \delta_k v_k^2 -  5 \phi_i - \phi_j + 5 \phi_k) \alpha_j^2 \\ \\ + \frac{v_i \phi_i}{2 \phi_k} (-8 \delta_i v_i^2 + 8 \delta_j v_j^2 - 5 \phi_i + 5 \phi_j - \phi_k) \alpha_k^2 \\ \\ - \displaystyle \frac{v_i}{2} (-8 \delta_i v_i^2 - \phi_i + 5 \phi_j + 5 \phi_k) \alpha_i^2 + \displaystyle\frac{c \, v_i (\phi_i - \phi_j - \phi_k)}{2 \phi_j \phi_k} \\ \\ 
 - \displaystyle\frac{1}{2 v_j v_k \phi_j \phi_k} (-v_i\phi_i V_j V_k  + v_j\phi_j V_i  V_k  + v_k \phi_k V_i V_j  ),  \\ \\

\displaystyle \frac{\partial V_1}{\partial u_1} = \displaystyle-\frac{V_1}{v_1}(2v_1^4 - 3v_1^6 - v_3^2 + v_1^2 v_3^2 + v_3^4) \alpha_1,\\ \\
\displaystyle \frac{\partial V_1}{\partial u_i} = \displaystyle -\frac{\delta_i v_i\phi_1}{v_i^2+v_j^2}(v_1^2-\delta_j v_j^2)  V_1\alpha_i, \   i,j \neq 1. 
\end{array}\right.
\end{equation}
where  $i,j,k\in \{1,2,3\}$ with $ i\neq j\neq k\neq i$. As well as the algebraic equations
\begin{equation}\label{compat.eq}
\begin{array}{llll}
\alpha_1 F_1 = 0,  \qquad \alpha_2 F_2 = 0  \qquad \mbox{and} \qquad \alpha_3 F_3 = 0,
\end{array}
\end{equation}       
where the functions  $F_1,F_2,F_3\colon M^3\to \R$ are defined in the appendix.
\end{proposition}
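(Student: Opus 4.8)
The plan is to rewrite the augmented holonomic system entirely in the reduced variables $(v,\alpha,V_1)$. Besides equations (i)--(vi) of (\ref{pde.sist.hol.min}), which hold because $f$ is holonomic and satisfies (\ref{caract.minima}), I would first record the two further families of relations obtained by differentiating the second and fourth equations of (\ref{caract.minima}) in each coordinate direction $u_i$, using (i) and (iv) for the off-diagonal derivatives $\partial v_l/\partial u_i = h_{il}v_i$ and $\partial V_l/\partial u_i = h_{il}V_i$ ($l\neq i$). Note that (v) and (vi) already express the diagonal derivatives $\partial v_i/\partial u_i$ and $\partial V_i/\partial u_i$ as linear combinations of the off-diagonal terms $h_{ij},h_{ik}$.

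The first genuine step is to justify the definition (\ref{def.alpha}), that is, to show that for fixed $i$ the quantity $h_{ij}/(v_j\phi_j)$ is independent of the choice of $j\neq i$. Substituting (v) and (vi) into the relation coming from the second equation of (\ref{caract.minima}) makes it collapse to an identity, so it carries no information; substituting them into the relation coming from the fourth (minimality) equation, then eliminating $V_2,V_3$ through (\ref{V2V3}) and one of the $v_l^2$ through the first equation of (\ref{caract.minima}), reduces it, for each fixed $i$, to $v_k\phi_k\,h_{ij}=v_j\phi_j\,h_{ik}$. This is exactly the well-definedness of $\alpha_i$, and it is precisely here that the polynomial $\phi_j=(\delta_j-v_j^2)(\delta_j-3v_j^2)$ is forced. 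With $h_{ij}=v_j\phi_j\alpha_i$ in hand, most of (\ref{red.system}) can then be read off directly: the off-diagonal $v$-equations are just (i), the diagonal $v$-equations are (v), and the two $V_1$-equations come from (iv) and (vi) after inserting (\ref{V2V3}). The off-diagonal $\alpha$-equations I would obtain by differentiating $\alpha_i=h_{ik}/(v_k\phi_k)$ in the direction $u_j$ and applying the Codazzi relation (iii), $\partial h_{ik}/\partial u_j=h_{ij}h_{jk}$, together with $\partial v_k/\partial u_j=h_{jk}v_j$.

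The diagonal $\alpha$-equations are the subtle ones. I would write the Gauss relation (ii) for each of the three unordered pairs $\{i,j\}$, expand $\partial h_{ij}/\partial u_i$ and $\partial h_{ji}/\partial u_j$ through $h_{ij}=v_j\phi_j\alpha_i$ (the $v\phi$-factors being differentiated by the already known off-diagonal rules), and thereby turn (ii) into a linear relation $v_j\phi_j\,\partial\alpha_i/\partial u_i+v_i\phi_i\,\partial\alpha_j/\partial u_j=(\text{known in }v,\alpha,V)$. The three pairs give a $3\times3$ linear system for $\partial\alpha_1/\partial u_1,\partial\alpha_2/\partial u_2,\partial\alpha_3/\partial u_3$ whose determinant equals $-2\,v_1\phi_1v_2\phi_2v_3\phi_3$; this is nonzero because Lemma \ref{lem.tecn.} guarantees $\phi_i=(v_i^2-\delta_i)(3v_i^2-\delta_i)\neq0$ on the relevant open dense subset, so the system inverts and produces the fourth block of (\ref{red.system}).

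Finally, for the algebraic equations (\ref{compat.eq}): substituting (\ref{V2V3}) into the third equation of (\ref{caract.minima}) writes $V_1^2$ as $(c-\tilde c)$ divided by a function of $v$ alone, whose denominator is the nonvanishing $\Theta$ of (\ref{good.term}). I would differentiate this algebraic identity in each direction $u_m$ and compare with the $V_1$- and $v$-equations of (\ref{red.system}); since $V_1\neq0$ and every $u_m$-derivative appearing in (\ref{red.system}) carries a common factor $\alpha_m$, the resulting consistency condition factors as $\alpha_mF_m=0$, which is (\ref{compat.eq}). Equivalently, these are the integrability conditions of the reduced system. I expect the main obstacle to be computational mass rather than conceptual difficulty: verifying the collapse that both defines $\alpha_i$ and singles out $\phi_j$, inverting the $3\times3$ system to obtain the long diagonal $\alpha$-equation, and carrying out the factorization that produces $F_1,F_2,F_3$ (cumbersome enough to be relegated to the appendix). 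Throughout, the recurring simplification is the first equation of (\ref{caract.minima}), used to eliminate one $v_i^2$ and so compress the coefficients into the compact forms displayed.
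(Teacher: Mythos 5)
Your reduction matches the paper's proof step for step until the very end: the well-definedness of $\alpha_i$ via the relation $v_j\phi_j h_{ik}=v_k\phi_k h_{ij}$ obtained by differentiating the minimality equation (this is exactly the paper's (\ref{rel.h-ki-kj})); the $v$-equations from (i) and (v); the $V_1$-equations from (iv) and (vi) combined with (\ref{V2V3}); the off-diagonal $\alpha$-equations from Codazzi (iii); and the diagonal $\alpha$-equations from the Gauss equations (ii), recast as the linear system $v_j\phi_j\,\partial\alpha_i/\partial u_i+v_i\phi_i\,\partial\alpha_j/\partial u_j=b$ over the three pairs, with the same determinant $-2v_1v_2v_3\phi_1\phi_2\phi_3\neq 0$ as in the paper's system (\ref{obt. alpha-ii}).

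The gap is in your derivation of (\ref{compat.eq}). You propose to differentiate the algebraic identity obtained by substituting (\ref{V2V3}) into the third equation of (\ref{caract.minima}) (which expresses $V_1^2$ as $(c-\tilde c)$ times a rational function of $v$ with denominator $\Theta$) and claim the consistency condition factors as $\alpha_mF_m=0$. This cannot produce the appendix functions $F_m$, for a structural reason: that identity involves only $v$ and $V_1$, and in (\ref{red.system}) every first derivative of $v_i$ and of $V_1$ in the direction $u_m$ is $\alpha_m$ times a function of $(v,V_1)$ alone. Hence the $u_m$-derivative of the identity has the form $\alpha_m G_m$ with $G_m$ a function of $(v,V_1)$ containing no $\alpha$'s. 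But the appendix functions have the form $F_m=F_{m1}\alpha_1^2+F_{m2}\alpha_2^2+F_{m3}\alpha_3^2+F_{m4}+F_{m5}$ with nonzero quadratic $\alpha$-coefficients, so $G_m\neq F_m$; at best your route yields a different, $\alpha$-free relation (and quite possibly $G_m\equiv 0$, since the $v$- and $V_1$-equations of (\ref{red.system}) were built precisely from the differentiated constraints (v), (vi), (iv), making the constraint conserved and the computation vacuous). Your parenthetical remark that this is ``equivalently'' the integrability conditions of the reduced system is not an equivalence: the integrability conditions are the only correct source here, and they are what the paper uses. Concretely, one computes the mixed partials $\partial^2\alpha_1/\partial u_2\partial u_1-\partial^2\alpha_1/\partial u_1\partial u_2$ (and the two analogous differences) from (\ref{red.system}); differentiating the right-hand sides, which are quadratic in $\alpha$, is what generates the $\alpha_l^2$ terms, and each difference factors as a nowhere-zero multiple of $\alpha_mF_m$. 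Getting these specific $F_m$ matters: Proposition \ref{propried.of.solu} later uses the coefficients $F_{ml}$ as a linear system in $\alpha_1^2,\alpha_2^2,\alpha_3^2$, so an $\alpha$-free substitute constraint would not support the rest of the argument.
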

\begin{proof}
By Lema \ref{lem.tecn.}, $\phi_i\neq 0,$ for all $i$. The  triple $(v,h,V),$ where  $\displaystyle h_{ij}=\frac{1}{v_i}\frac{\partial v_j}{\partial u_i},$ satisfies the system of PDE's \eqref{pde.sist.hol.min}. By differentiating the fourth equations in (\ref{caract.minima}) and using the equations of system (\ref{pde.sist.hol.min}) we obtain 
\begin{equation}\label{rel.h-ki-kj}
 v_j\phi_j h_{ik}=v_k\phi_k h_{ij}, \quad 1\leq i\neq j\neq k\neq i\leq 3.
\end{equation}
From  equations (i) and (v) of system (\ref{pde.sist.hol.min}), together with (\ref{def.alpha}), we obtain the derivatives $\displaystyle{\dfrac{\partial v_i}{\partial u_j}},$ for all $i,j$, of system (\ref{red.system}). In a similar way we can use equations (i) and (iii) of system (\ref{pde.sist.hol.min}), the equations already obtained of system (\ref{red.system}) and  (\ref{def.alpha}) to obtain the expressions for the derivatives $\displaystyle{\dfrac{\partial \alpha_i}{\partial u_j}},$ for all $i \neq j$. To compute  $\displaystyle{\dfrac{\partial \alpha_i}{\partial u_i}},$ for all $i$, notice that equation $(ii)$ of system (\ref{pde.sist.hol.min}), together with (\ref{def.alpha}) and the remaining equations of (\ref{red.system}), determine the system of linear equations
\begin{equation}\label{obt. alpha-ii}
\left\{\begin{array}{l}
\displaystyle v_2\phi_2 \frac{\partial \alpha_1}{\partial u_1} + v_1\phi_1 \frac{\partial \alpha_2}{\partial u_2} = b_1 \vspace{.2cm}\\
\displaystyle v_3\phi_3 \frac{\partial \alpha_1}{\partial u_1} + v_1\phi_1 \frac{\partial \alpha_3}{\partial u_3} = b_2 \vspace{.2cm}\\
\displaystyle v_3\phi_3 \frac{\partial \alpha_2}{\partial u_2} + v_2\phi_2 \frac{\partial \alpha_3}{\partial u_3} = b_3 
\end{array}\right.
\end{equation} 
in the unknowns $\displaystyle{\dfrac{\partial \alpha_i}{\partial u_i}},$ for all $i$, and the functions $b_i,$ being given by
\begin{equation*}
\begin{array}{lll}
b_1 &=& v_1 v_2 \phi_2 (5 \phi_2 - 8 v_2^2 - 4) \alpha_1^2 + v_1 v_2 \phi_1 (5 \phi_1 + 8 v_1^2 - 4) \alpha_2^2 \vspace{.2cm}\\ & & + \ v_1 v_2 \phi_1 \phi_2 \alpha_3^2 + V_1 V_2 \\ \\
b_2 &=& v_1 v_3 \phi_3 (5 \phi_3 + 8 v_3^2 - 4) \alpha_1^2 +  v_1 v_3 \phi_1 \phi_3 \alpha_2^2 \vspace{.2cm}\\ & & + \ v_1 v_3 \phi_1 (5 \phi_1 + 8 v_1^2 - 4) \alpha_3^2 + V_1 V_3 \\ \\
b_3 &=& v_2 v_3 \phi_2 \phi_3 \alpha_1^2 + v_2 v_3 \phi_3 (5 \phi_3 + 8 v_3^2 - 4) \alpha_2^2 \vspace{.2cm} \\ & & + \ v_2 v_3 \phi_2 (5 \phi_2 - 8 v_2^2 - 4) \alpha_3^2 + V_2 V_3
\end{array}
\end{equation*}
Then, one can check that  
\begin{displaymath}
\det\left( \begin{array}{ccc}
v_2\phi_2 & v_1\phi_1 & 0 \\
v_3\phi_3 & 0 & v_1\phi_1 \\
0 & v_3\phi_3 & v_2\phi_2
\end{array}\right) = -2v_1v_2v_3\phi_1\phi_2\phi_3 \neq 0, 
\end{displaymath}
and that the fourth equation of (\ref{red.system}) is the unique solution of system (\ref{obt. alpha-ii}).

It remains to show that the pair $(v,\alpha)$  satisfies (\ref{compat.eq}). Computing the mixed derivatives  $\displaystyle{\dfrac{\partial^2 \alpha_i}{\partial u_j\partial u_k} = \dfrac{\partial^2 \alpha_i}{\partial u_k\partial u_j}},$ for all $i,k,j$, from the equations in (\ref{red.system}), we obtain 

\begin{equation*}
\begin{array}{l}
 \displaystyle 0=\frac{\partial^2 \alpha_1}{\partial u_2\partial u_1}-\frac{\partial^2 \alpha_1}{\partial u_1\partial u_2} = 
\frac{2 v_2}{v_1 (v_2^2 + v_3^2)^2 \phi_2^2 \phi_3}\alpha_2 F_2;  
\vspace{0.2cm} \\

\displaystyle 0= \frac{\partial^2 \alpha_1}{\partial u_3\partial u_1}-\frac{\partial^2 \alpha_1}{\partial u_1\partial u_3} = 
\frac{2 v_3}{v_1 (v_2^2 + v_3^2)^2 \phi_2 \phi_3^2}\alpha_3 F_3 ;  
 \vspace{0.2cm} \\

\displaystyle 0=\frac{\partial^2 \alpha_2}{\partial u_2\partial u_1} - \frac{\partial^2 \alpha_2}{\partial u_1\partial u_2} = 
\frac{-2 v_2}{v_1 (v_2^2 + v_3^2) \phi_1^2 \phi_3}\alpha_1 F_1 .   
\end{array}
\end{equation*}
\end{proof}


\begin{proposition}\label{propried.of.solu}
With the  assumptions of Proposition  \ref{prop.key} we have:
\begin{enumerate}
\item[(a)] There does not exist any open subset  $U \subset M^3$ where the functions  $\alpha_1$, $\alpha_2$ and $\alpha_3$ all vanish identically;
\item[(b)] The function $\alpha_1\alpha_2\alpha_3$ vanishes identically on $M^3$;
\item[(c)] There does not exist any open subset  $U \subset M^3$ 
where two of the functions $\alpha_1$, $\alpha_2$ and $\alpha_3$  vanish identically and the other is nowhere vanishing;
\item[(d)] There does not exist any open subset  $U \subset M^3$ where one of the functions $\alpha_1$, $\alpha_2$ and $\alpha_3$  vanishes identically and the others are nowhere vanishing;
\end{enumerate} 
\end{proposition}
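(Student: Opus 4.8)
The plan is to prove all four items by contradiction, in each case assuming the indicated vanishing pattern holds on a nonempty open set $U$ and combining the reduced system \eqref{red.system}, the algebraic compatibility relations \eqref{compat.eq}, and the nondegeneracy facts of Lemma \ref{lem.tecn.} (namely $\phi_i\neq 0$, the $v_i$ pairwise distinct, and $\Theta\neq 0$) to reach an inconsistency. The recurring mechanism is that $\alpha_i\equiv 0$ on an open set forces every partial derivative of $\alpha_i$ to vanish there, which turns the associated diagonal equation of \eqref{red.system} into a genuine algebraic identity (its right-hand side has no overall factor $\alpha_i$), while the off-diagonal equations, which do carry such a factor, hold automatically. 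For item (a), if $\alpha_1\equiv\alpha_2\equiv\alpha_3\equiv 0$ on $U$ then by \eqref{def.alpha} every off-diagonal $h_{ij}$ vanishes on $U$; the first two equations of \eqref{red.system} give that $v$ is constant, the last two give that $V_1$ is constant, and then \eqref{V2V3} shows $V_2,V_3$ are constant as well. Substituting $h_{ij}\equiv 0$ into equation (ii) of \eqref{pde.sist.hol.min} leaves $V_iV_j+c\,v_iv_j=0$ for all $i\neq j$, i.e. the principal curvatures $\lambda_i=V_i/v_i$ satisfy $\lambda_i\lambda_j=-c$ for every pair. If $c\neq 0$ this forces $\lambda_1=\lambda_3$ (by dividing two of the relations), and if $c=0$ it forces two of the $\lambda_i$ to vanish; either way this contradicts the hypothesis of three distinct nonzero principal curvatures.

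For item (b), set $W=\{\alpha_1\alpha_2\alpha_3\neq 0\}$, which is open by continuity. On $W$ none of the $\alpha_i$ vanishes, so \eqref{compat.eq} forces $F_1=F_2=F_3=0$ there. Using \eqref{V2V3} to eliminate $V_2,V_3$ in favour of $V_1$ and the third relation of \eqref{caract.minima} to replace $V_1^2$, the $F_i$ become polynomial expressions in $v_1,v_2,v_3$ subject to $\sum_i\delta_iv_i^2=1$. I would then carry out an algebraic elimination on $F_1=F_2=F_3=0$ and show that every admissible solution must violate one of the excluded conditions ($\phi_i=0$, $v_i=v_j$, or $\Theta=0$). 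Hence $W=\emptyset$, proving $\alpha_1\alpha_2\alpha_3\equiv 0$ on $M^3$.

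For items (c) and (d) I treat the remaining patterns, exploiting the symmetry of the indices $1\leftrightarrow 3$ coming from $\delta_1=\delta_3=1$ to cut down the number of cases. In case (c), say $\alpha_1\equiv\alpha_2\equiv 0$ and $\alpha_3$ nowhere zero on $U$: the off-diagonal equations of \eqref{red.system} show that $v_1,v_2,v_3$ and $\alpha_3$ depend on $u_3$ only, while $\partial\alpha_1/\partial u_1\equiv 0$ and $\partial\alpha_2/\partial u_2\equiv 0$ convert two diagonal equations of \eqref{red.system} into algebraic relations expressing $\alpha_3^2$ through $(v,V)$; together with $F_3=0$ (from \eqref{compat.eq}, since $\alpha_3\neq 0$) and, where necessary, their $u_3$-derivatives fed back through \eqref{red.system}, these over-determine $(v,V)$ and collide with the nondegeneracy conditions. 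In case (d), say $\alpha_1\equiv 0$ and $\alpha_2,\alpha_3$ nowhere zero on $U$, the relation $\partial\alpha_1/\partial u_1\equiv 0$ supplies one algebraic constraint while \eqref{compat.eq} gives $F_2=F_3=0$; the same strategy of combining these identities with their derivatives along the surviving coordinates produces a contradiction.

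The decisive and most laborious step is the elimination in (b): showing that the polynomials $F_1,F_2,F_3$ displayed in the appendix have no common zero compatible with $\sum_i\delta_iv_i^2=1$ and with $\phi_i\neq 0$, $\Theta\neq 0$. I expect this to demand a careful resultant- or Gröbner-type computation rather than a conceptual shortcut; the analogous but lighter eliminations in (c) and (d) carry the additional bookkeeping of tracking which coordinate each quantity depends on, so that the diagonal equations of \eqref{red.system} may legitimately be read as algebraic identities.
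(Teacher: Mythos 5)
Your item (a) is correct and is essentially the paper's argument: with all $\alpha_i\equiv 0$ the reduced system degenerates to the three relations $V_iV_j+c\,v_iv_j=0$, which are incompatible with three distinct nonzero principal curvatures. The structural setup for (b)--(d) (namely that $\alpha_i\equiv 0$ turns the diagonal equations of \eqref{red.system} into algebraic identities, while \eqref{compat.eq} supplies $F_j=0$ for the non-vanishing $\alpha_j$) is also the paper's setup. The gap is in how you propose to close (b), and the same gap propagates to your endgame for (c) and (d).

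For (b) you propose a purely algebraic elimination: substitute \eqref{V2V3} and the third equation of \eqref{caract.minima} into $F_1=F_2=F_3=0$ and show there is no admissible common zero. This cannot work, because the common zero set is nonempty and positive-dimensional: $F_1=F_2=F_3=0$ is a system of three equations in the unknowns $v_1,v_2,v_3,\alpha_1^2,\alpha_2^2,\alpha_3^2$ subject to the single constraint $\sum_i\delta_iv_i^2=1$, so no contradiction can come from algebra alone. Indeed, the paper computes $\det(F_{ij})_{1\le i,j\le 3}=0$, so viewing $F_1=F_2=F_3=0$ as a linear system in $(\alpha_1^2,\alpha_2^2,\alpha_3^2)$, its solvability is equivalent to one scalar compatibility condition, which (after cancelling the nonzero factors $\phi_i$, $V_1^2$, $\Theta^2$, $3v_i^2-\delta_i$, $v_1^2-v_3^2$, etc.\ guaranteed by Lemma \ref{lem.tecn.}) reads $P(v_1,v_2)=0$ for an explicit nontrivial polynomial $P$ --- a condition that is perfectly satisfiable pointwise. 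The contradiction is \emph{differential}, not algebraic: since $P$ is a nonzero polynomial, $\psi=(v_1,v_2)\colon U\to\R^2$ takes values in the algebraic curve $P^{-1}(0)$, hence has rank at most one, so every $2\times 2$ minor of $\bigl(\partial v_i/\partial u_j\bigr)_{1\le i\le 2,\,1\le j\le 3}$ vanishes; computing these minors from the first equations of \eqref{red.system} one finds they are nonzero multiples of $\Theta$, contradicting \eqref{good.term}. Without this functional-dependence step (or its variant in case (c): Bezout's theorem applied to two independent curves forces $v_2,v_3$ to be constant, and then \eqref{red.system} forces the remaining $\alpha_k\equiv 0$, contradicting item (a)), your elimination in (b) terminates in a satisfiable condition, not a contradiction, and your phrase ``collide with the nondegeneracy conditions'' in (c) and (d) has no proof behind it. This is the missing idea, and it is exactly the point of including $\Theta\neq 0$ in Lemma \ref{lem.tecn.}.
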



\begin{proof}
$(a)$ Let $U \subset M^3$ be an open subset where $\alpha_1$, $\alpha_2$ and $\alpha_3$ all vanish identically. Then the fourth equation in (\ref{red.system}) reduces to 
\begin{equation}\label{a1a2a3zero}
\left\{\begin{array}{lll}
c \, v_1 v_2 v_3 (\phi_1 - \phi_2 - \phi_3) = -v_1 V_2 V_3 \phi_1 + V_1 v_2 V_3 \phi_2 + V_1 V_2 v_3 \phi_3, \vspace{.2cm} \\
 c \, v_1 v_2 v_3 (-\phi_1 + \phi_2 - \phi_3) = v_1 V_2 V_3 \phi_1 - V_1 v_2 V_3 \phi_2 + V_1 V_2 v_3 \phi_3,\vspace{.2cm}\\
c \, v_1 v_2 v_3 (-\phi_1 - \phi_2 + \phi_3) = v_1 V_2 V_3 \phi_1 + V_1 v_2 V_3 \phi_2 - V_1 V_2 v_3 \phi_3.
\end{array}\right.
\end{equation}
Adding the equations in (\ref{a1a2a3zero}) two by two we get 
$$\frac{V_1}{v_1}=\frac{V_2}{v_2}=\frac{V_3}{v_3} \quad {\text{or}} \quad c=0,$$ what is a contradiction.  \\
\noindent $(b)$ Suppose that  $\alpha_1,\alpha_2$ and $\alpha_3$ do not vanish at some point, and hence on some open subset $U\subset M^3$. Since $\alpha_1,\alpha_2$ and $\alpha_3$ do not vanish at any point of $U$, it follows from (\ref{compat.eq}) that $F_i\equiv 0$ on $U$, $1\leq i\leq 3$, and hence  $\alpha_1^2$, $\alpha_2^2$ and $\alpha_3^2$ satisfy the system of linear equations
\begin{equation}\label{sist.prop1}
\left\{\begin{array}{llll}
F_{11} \alpha_1^2 + F_{12} \alpha_2^2 + F_{13} \alpha_3^2 = - F_{14} - F_{15}, \vspace{0.2cm} \\
F_{21} \alpha_1^2 + F_{22} \alpha_2^2 + F_{23} \alpha_3^2 = - F_{24} - F_{25}, \vspace{0.2cm} \\
F_{31} \alpha_1^2 + F_{32} \alpha_2^2 + F_{33} \alpha_3^2 = - F_{34} - F_{35}.
\end{array}\right.
\end{equation}       
Notice that
\begin{eqnarray*}
\det\left(\begin{array}{lll}
F_{11} & F_{12} & F_{13}  \vspace{0.2cm} \\
F_{21} & F_{22} & F_{23}  \vspace{0.2cm} \\
F_{31} & F_{32} & F_{33} 
\end{array}\right) =0\,\,\, \text{and}\,\,\, 
\det\left(\begin{array}{lll}
F_{14}+F_{15} & F_{12} & F_{13}  \vspace{0.2cm} \\
F_{24}+F_{25} & F_{22} & F_{23}  \vspace{0.2cm} \\
F_{34}+F_{35} & F_{32} & F_{33} 
\end{array}\right)  = a 
\end{eqnarray*}
where
\begin{equation*}
\begin{array}{lll}
a &=& 36 \phi_1^3 \phi_2 \phi_3 v_1^6 V_1^2 \Theta^2(-1 + 3 v_1^2) (1 + 3 v_2^2) (-1 + 3 v_3^2) (v_1^2 + v_2^2) (v_1^2 - v_3^2) 
  (v_2^2 \\  &  & + \, v_3^2)^4 (4 - 15 v_1^2 + 15 v_1^4 + 15 v_2^2 - 30 v_1^2 v_2^2 + 15 v_1^4 v_2^2 + 15 v_2^4 - 15 v_1^2 v_2^4). 
\end{array}
\end{equation*}
Let $P$ be the polynomial of two variables given by
\begin{equation*}
P(v_1,v_2) = 4 - 15 v_1^2 + 15 v_1^4 + 15 v_2^2 - 30 v_1^2 v_2^2 + 15 v_1^4 v_2^2 + 15 v_2^4 - 15 v_1^2 v_2^4.
\end{equation*}
As the system (\ref{sist.prop1}) has a solution, we have that $a=0$ and, therefore,  $P(\psi(p))=0,$ where $\psi:U\to\mathbb{R}^2,$ $\psi(p)=(v_1(p),v_2(p))$, for all $p\in U\subset M^3.$ Since $P^{-1}(0)$ is an algebraic curve, it follows that $\psi$ has rank zero or one at each point. This implies that the determinant of any submatrix of order $2,$ of the matrix
\begin{eqnarray*}
\left(\begin{array}{lll}
\displaystyle\frac{\partial v_i}{\partial u_j}
\end{array}\right)_{1\leq i \leq 2; \, 1\leq j \leq 3} ,
\end{eqnarray*}
is a equal to zero, namely, 
\begin{equation*}
0=v_1^2 \phi_1 - v_2^2 \phi_2 + v_3^2 \phi_3=\Theta,
\end{equation*}
what is a contradiction, by (\ref{good.term}).\\
 
\noindent $(c)$ Let $U \subset M^3$ be an open subset where $\alpha_1$ and $\alpha_2$ vanish identically and $\alpha_3(p)\neq 0$ for all $p\in U$. In this case, the fourth equation in (\ref{red.system}), with $i=1,2,$ and $F_3=0$  reduce, respectively, to
 \begin{equation}\label{al1=al2=0}
\left\{ \begin{array}{l}
A \alpha_3^2 + B + C =0, \\\\

\widetilde{A} \alpha_3^2 + \widetilde{B} + \widetilde{C} =0, \\\\

F_{33} \alpha_3^2 + F_{34}+F_{35}=0.
 \end{array}\right.
 \end{equation}
 where
 \begin{equation*}
 \begin{array}{lll}
\displaystyle  A=\frac{v_1 \phi_1(12 v_1^2 - 15 v_1^4 + 12 v_2^2 + 15 v_2^4 - \phi_3)}{2 \phi_3},  \vspace{.2cm}\\
\displaystyle  B=\frac{v_1 \phi_1 V_2 V_3  - v_2\phi_2 V_1 V_3  - v_3 \phi_3 V_1 V_2 }{2 v_2 v_3 \phi_2 \phi_3} \vspace{.2cm}\\
\displaystyle  C=\frac{c \, v_1 (\phi_1 - \phi_2 - \phi_3)}{2 \phi_2 \phi_3}, \vspace{.2cm}\\
\displaystyle  \widetilde{A}=\frac{v_2  \phi_2 (-12 v_1^2 + 15 v_1^4 - 12 v_2^2 - 15 v_2^4 - \phi_3)}{2 \phi_3}, \vspace{.2cm}\\
\displaystyle  \widetilde{B}=\frac{- v_1\phi_1 V_2 V_3  + v_2 \phi_2 V_1 V_3  - v_3 \phi_3 V_1 V_2 }{2 v_1 v_3 \phi_1 \phi_3} \vspace{.2cm}\\
\displaystyle  \widetilde{C}= \frac{c \, v_2 (-\phi_1 + \phi_2 - \phi_3)}{2 \phi_1 \phi_3}.
 \end{array}
 \end{equation*}

It follows from the equations in  (\ref{al1=al2=0}) that
\begin{equation*}
\det\left(\begin{array}{ccc}
A & B & C \\
\widetilde{A} & \widetilde{B} & \widetilde{C} \\
F_{33} & F_{34} & F_{35}
\end{array}\right)=0.
\end{equation*}

\begin{eqnarray*}
\det\left(\begin{array}{cc}
A & B+C \\
\widetilde{A} & \widetilde{B} + \widetilde{C}
\end{array}\right)=0 \qquad {\text{and}} \qquad \det\left(\begin{array}{cc}
A &  B+C \\
F_{33} & F_{34}+F_{35}
\end{array}\right)=0.
\end{eqnarray*}
Computing the determinants and using the expressions for  $V_2$ and $V_3,$ in terms of $V_1,$ we obtain respectively
\begin{equation}\label{osdet=0}
\left\{\begin{array}{l}  
 2 c v_1 v_2   (1 + 3 v_2^2) ( 1 - v_1^2) (v_1^2 + v_2^2)^2V_1^2 P_1 = 0\\\\
     
\displaystyle\frac{v_2 (v_1^2 + v_2^2)}{ v_1 (v_2^2 +  v_3^2)^2 \phi_3} [V_1^2 P_2 - c v_1^2 (-1 + 9 v_3^2) (v_2^2 + v_3^2)^2]=0\\\\
       
v_1 (v_1^2 +  v_2^2) \phi_1[V_1^2 P_3 +  c v_1^2 (v_2^2 + v_3^2)^2 P_4]=0,
\end{array}\right.
\end{equation}
onde 
\begin{equation*}
\begin{array}{l}  
P_1= -v_2^2 - v_2^4 + 10 v_2^2 v_3^2 + 9 v_2^4 v_3^2 + 8 v_3^4 - 9 v_2^2 v_3^4 - 9 v_3^6 - 27 v_3^8; \vspace{.2cm}\\
     
P_2= v_2^2 + v_2^4 + 3 v_3^2 - v_2^2 v_3^2 - 14 v_3^4 + 18 v_3^6; \vspace{.2cm}\\
       
P_3= -8 v_2^2 - 40 v_2^4 - 85 v_2^6 - 86 v_2^8 - 33 v_2^{10} + 96 v_2^2 v_3^2 + 416 v_2^4 v_3^2  +  763 v_2^6 v_3^2 \vspace{.2cm}\\ 
\hspace{1cm} + \, 672 v_2^8 v_3^2 + 225 v_2^{10} v_3^2 + 16 v_3^4 - 329 v_2^2 v_3^4 - 1128 v_2^4 v_3^4  -  1341 v_2^6 v_3^4 \vspace{.2cm}\\ 
\hspace{1cm} - \, 594 v_2^8 v_3^4 - 50 v_3^6 +  901 v_2^2 v_3^6 + 2226 v_2^4 v_3^6 + 1323 v_2^6 v_3^6 - 2 v_3^8 \vspace{.2cm}\\ 
\hspace{1cm} - \,  2118 v_2^2 v_3^8 - 3960 v_2^4 v_3^8 - 1620 v_2^6 v_3^8 + 90 v_3^{10} +  2268 v_2^2 v_3^{10} \vspace{.2cm}\\ 
\hspace{1cm} + \, 2430 v_2^4 v_3^{10} - 54 v_3^{12} - 810 v_2^2 v_3^{12};\\\\

P_4= 16 v_2^2 + 37 v_2^4 + 21 v_2^6 - 16 v_3^2 -  223 v_2^2 v_3^2 - 441 v_2^4 v_3^2 - 234 v_2^6 v_3^2 \vspace{.2cm}\\ 
\hspace{1cm} + \, 34 v_3^4 +  600 v_2^2 v_3^4 + 1035 v_2^4 v_3^4 + 405 v_2^6 v_3^4 + 36 v_3^6 - 459 v_2^2 v_3^6 \vspace{.2cm}\\ 
\hspace{1cm} - \, 567 v_2^4 v_3^6 - 54 v_3^8 + 162 v_2^2 v_3^8,
\end{array}
\end{equation*}

If $c=0,$  then it follows from the equations in (\ref{osdet=0}) that $P_2=0$ and $P_3=0.$ We have that $P_2^{-1}(0)$ and $P_3^{-1}(0)$ are algebraic curves. For Bezout theorem, $P_2^{-1}(0) \cap P_3^{-1}(0)$ has only a finite number of points. Assuming without loss of generality that $U$ is connected, we conclude that the functions  $v_2$ e $v_3$ are constant in $U$, which is a contradiction, considering item $(a)$ of this proposition.

If $c\neq 0,$ then it follows from the equations (\ref{osdet=0}) that $P_1=0.$ Differentiating the equation $P_1=0$, with respect to $u_3$, we get
\begin{eqnarray}\label{caso.c.3}
\begin{array}{l}
0 = - \, v_2^2 - 16 v_2^4 - 30 v_2^6 - 15 v_2^8 + 14 v_2^2 v_3^2 + 178 v_2^4 v_3^2 + 300 v_2^6 v_3^2 + 135 v_2^8 v_3^2 \vspace{.2cm}\\
\hspace{.8cm} + \, 32 v_3^4 + 92 v_2^2 v_3^4 - 180 v_2^4 v_3^4 - 270 v_2^6 v_3^4 - 134 v_3^6 - 186 v_2^2 v_3^6 + 54 v_2^4 v_3^6 \vspace{.2cm}\\
\hspace{.8cm} - \, 33 v_3^8 - 891 v_2^2 v_3^8 - 972 v_2^4 v_3^8 + 459 v_3^{10} + 972 v_2^2 v_3^{10} - 324 v_3^{12}
\end{array}
\end{eqnarray} 
The same argument, used in the case $c=0$, applied to equations $P_1=0$ and (\ref{caso.c.3}), implies the same conclusion in that case. 

The other possible cases follow in a similar way.

\noindent $(d)$   
 Let $U\subset M^3$ be an open and connected subset where  $\alpha_1$ vanishes identically but  $\alpha_2(p)\neq 0\neq \alpha_3(p)$ for all $p\in U$.  It follows from  (\ref{compat.eq}) and the fourth equation in (\ref{red.system}), when $i=1,$ that $F_2=0$, $F_3=0$ and $\dfrac{\partial \alpha_1}{\partial u_1}=0,$ that is,  $\alpha_2^2$ and $\alpha_3^2$ satisfy the  system of linear equations
\begin{equation}\label{sist.a1=0}
\left\{\begin{array}{rrrrcrl}
A \alpha_2^2 &+& B \alpha_3^2 &+& C &=& 0; \vspace{.2cm}\\
F_{22} \alpha_2^2 &+& F_{23} \alpha_3^2 &+& F_{24}+F_{25} &=& 0; \vspace{.2cm}\\
F_{32} \alpha_2^2 &+& F_{33} \alpha_3^2 &+& F_{34}+F_{35} &=& 0,
\end{array}\right.
\end{equation}
where 
\begin{equation*}
\begin{array}{l}
\displaystyle A = \frac{ v_1 \phi_1 }{2 \phi_2}(12 v_1^2 - 15 v_1^4 - 12 v_3^2 + 15 v_3^4 - \phi_2); \vspace{.2cm}\\
\displaystyle B = \frac{ v_1 \phi_1}{2 \phi_3}(12 v_1^2 - 15 v_1^4 + 12 v_2^2 + 15 v_2^4 - \phi_3); \vspace{.2cm}\\
\displaystyle C = \frac{v_1 V_2 V_3 \phi_1 - V_1 v_2 V_3 \phi_2 - V_1 V_2 v_3 \phi_3}{2 v_2 v_3 \phi_2 \phi_3}.
\end{array}
\end{equation*}
The fact that the system (\ref{sist.a1=0}) has a solution implies that
\begin{equation*}
\det \left(\begin{array}{ccc}
A  & B  & C  \\ 
F_{22}  & F_{23} & F_{24}+F_{25} \\ 
F_{32}  & F_{33}  & F_{34}+F_{35}
\end{array}\right) =0.
\end{equation*}
Computing such determinant we obtain that $P(\psi(p))=0,$ for all $p\in U,$ where $P=P(v_1,v_2)$ is a polynomial in two variables 
\begin{equation}
P=c R_1-\tilde{c} R_2,
\end{equation}
where $R_1$ and $R_2$ are defined in the appendix, and $\psi:U\to\mathbb{R}^2,$ $\psi(p)=(v_1(p),v_2(p))$.
Since $P$ is not the null polynomial, regardless of the values assigned to $c$ and $\tilde{c},$ then $P^{-1}(0)$ is an algebraic curve and $\psi$ has rank zero or one, at each point. This implies that the determinant of any order $2,$ submatrix of the matrix
\begin{eqnarray*}
\left(\begin{array}{lll}
\displaystyle\frac{\partial v_i}{\partial u_j}
\end{array}\right)_{1\leq i \leq 2; \, 1\leq j \leq 3} ,
\end{eqnarray*}
is equal to zero. Any of the determinants implies $\Theta=0,$ contradicting (\ref{good.term}).

\end{proof}


 The following theorem provides the solutions the problem, mentioned in the introduction of this paper, for cases of hypersurfaces $f$ with three nonzero distinct principal curvatures.

\begin{theorem}\label{thm:main}
There exists no minimal hypersurface $f:M^3 \to \Q^4(c)$ with three nonzero distinct principal curvatures and that for which exist another isometric immersion $\tilde{f}:M^3 \to Q^4(\tilde{c})$ with $c \neq \tilde{c}.$
\end{theorem}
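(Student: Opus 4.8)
The plan is to argue by contradiction and to reduce the statement to the combinatorics of the vanishing loci of the functions $\alpha_1,\alpha_2,\alpha_3$ of Proposition \ref{prop.key}, whose behaviour is completely determined by Proposition \ref{propried.of.solu}. Suppose such an $f$ and $\tilde f$ exist. Since $f$ has three distinct principal curvatures and admits a second isometric immersion $\tilde f\colon M^3\to\Q^4(\tilde c)$, the converse part of Theorem \ref{main.theo} shows that $f$ is locally a holonomic hypersurface whose associated pair $(v,V)$ satisfies the first three equations of (\ref{main.theo.eq}). Because the principal curvatures are $V_i/v_i$, minimality of $f$ is precisely the condition $\sum_i V_i/v_i=0$, which is the fourth equation of (\ref{caract.minima}). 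Restricting to a simply connected coordinate neighbourhood, we are therefore in the hypotheses of Lemma \ref{lem.tecn.} and of Propositions \ref{prop.key} and \ref{propried.of.solu}. The sought contradiction is local, so it suffices to work on such a neighbourhood, which we continue to denote by $M^3$.

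I would then package items (a)--(d) of Proposition \ref{propried.of.solu} into a single topological statement. For each $i$ set $Z_i=\alpha_i^{-1}(0)$; since $\alpha_i$ is continuous, $Z_i$ is closed and its boundary $\partial Z_i=Z_i\setminus\mathrm{int}(Z_i)$ is a closed set with empty interior, so its complement
$$
\{\alpha_i\neq 0\}\cup \mathrm{int}(Z_i)
$$
is open and dense. Consequently the finite intersection
$$
G=\bigcap_{i=1}^{3}\big(\{\alpha_i\neq 0\}\cup \mathrm{int}(Z_i)\big)
$$
is open and dense in $M^3$, and by construction every point of $G$ admits a connected neighbourhood $O\subset G$ on which each $\alpha_i$ is either nowhere vanishing or identically zero.

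Finally I would run the case analysis on such an $O$, organised by the number $m\in\{0,1,2,3\}$ of functions among $\alpha_1,\alpha_2,\alpha_3$ that vanish identically on $O$. If $m=0$, then $\alpha_1\alpha_2\alpha_3$ is nowhere zero on $O$, contradicting item (b); the cases $m=1$, $m=2$ and $m=3$ are excluded by items (d), (c) and (a), respectively. As $G$ is dense in the nonempty manifold $M^3$ it is nonempty, so such an $O$ exists and every possibility produces a contradiction. Hence no minimal hypersurface with three nonzero distinct principal curvatures can admit a second isometric immersion into $\Q^4(\tilde c)$, $\tilde c\neq c$.

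At this stage the only delicate point is bookkeeping rather than analysis: the substantive work—the reduced system (\ref{red.system}), the algebraic compatibility relations (\ref{compat.eq}), and above all the determinant and Bézout arguments ruling out each individual vanishing pattern—has already been discharged in Propositions \ref{prop.key} and \ref{propried.of.solu}. The main thing to verify carefully is that the four cases in Proposition \ref{propried.of.solu} genuinely exhaust the local behaviour of $(\alpha_1,\alpha_2,\alpha_3)$; the open dense set $G$ is exactly the device that forces this exhaustion, turning the four partial nonexistence results into a single global contradiction.
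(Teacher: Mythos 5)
Your proof is correct and follows essentially the same route as the paper: both reduce, via the converse part of Theorem \ref{main.theo} together with minimality, to Propositions \ref{prop.key} and \ref{propried.of.solu}, and then obtain a contradiction purely from the admissible vanishing patterns of $\alpha_1,\alpha_2,\alpha_3$. The only difference is organizational: the paper peels off nested open sets sequentially (item (a) gives an open set where some $\alpha_i$ is nonvanishing, (c) refines it so a second $\alpha_j$ is nonvanishing, (d) yields a point where the third is nonzero, contradicting (b)), whereas you run the identical four-way case split on an open dense set where the vanishing pattern is locally constant.
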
 
 \begin{proof}
 By Theorem \ref{main.theo} we can assume that  $f$ is a holonomic hypersurface whose associated pair $(v,V)$ satisfies (\ref{main.theo.eq}). Since $f$ is minimal, the pair $(v,V)$ satisfies the four equations in (\ref{caract.minima}). Let $\alpha$ be the function defined in Proposition \ref{prop.key}, which satisfies  $(a)$, $(b)$, $(c)$ and  $(d)$ in Proposition  \ref{propried.of.solu}. By  $(a)$, there exists an open subset  $U\subset M^3$ and  $i\in\{1,2,3\}$ such that $\alpha_i(p)\neq 0$ for all $p\in U$. It follows from  $(c)$  that there exists an open subset  $U'\subset U$ and $j\in\{1,2,3\}$, $j\neq i,$ such that   $\alpha_j(q)\neq 0$ for all $q\in U'.$ Then  $(d)$ implies that there exists  $p_0\in U'$ such that  $\alpha_k(p_0)\neq 0$, $j\neq k\neq i$. Thus  $\alpha_1(p_0)\neq 0$,  $\alpha_2(p_0)\neq 0$ and  $\alpha_3(p_0)\neq 0$, which contradicts $(b)$. 
 \end{proof}

\section{Proof the Theorem \ref{main1}}

Let $f\colon M^{3} \to \Q^{4}(c)$ and  $\tilde{f}\colon M^{3} \to \Q^{4}(\tilde{c})$ be hypersurfaces with  $c \neq \tilde{c}$. The second author and Tojeiro \cite{ct} shown that at each point $x\in M^3$ there exists an  orthonormal basis $\{e_1,e_2,e_{3}\}$ of $T_xM^{3}$ that simultaneously  diagonalizes the second fundamental forms of   $f$ and $\tilde{f}$. Let  $\lambda_1, \lambda_2, \lambda_3$ and $\mu_1, \mu_2, \mu_3$ be the principal curvatures of  $f$ and $\tilde f$ correspondent to  $e_1, e_2$ and $e_3$, respectively. 

Under these conditions, we have the following:

 \begin{lemma}\label{multdois}
 \begin{itemize}
 \item[(a)]  Assume that $f$ is minimal and has a principal curvature of multiplicity two,  say, that $\lambda_1=\lambda_2:=\lambda$. Then 
 $$2\lambda+\lambda_3=0,\,\,\,\, \mu_1=\mu_2:= \mu \neq 0\,\,\,\,  2\mu + \mu_3=\dfrac{3}{\mu}(c-\tilde c).$$
 
 \item[(b)] Assume that $f$ is minimal and has a null principal curvature,  say, that $\lambda_3=0$. Then $c>\tilde c$,  $\mu_1=\mu_2:=\mu$, 
 \be\label{e1}
 c-\tilde{c}-\lambda_1^2=\mu^2
 \ee
 and
 \be\label{e2}
 c-\tilde c= \mu\mu_3.
 \ee
\end{itemize}
 \end{lemma}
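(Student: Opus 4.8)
The engine of the whole argument is the equality of the two Gauss equations. Since $f$ and $\tilde f$ are isometric immersions of the same $M^3$, the induced metric, and hence the sectional curvature of each coordinate plane, agree; so for every pair $i\neq j$ the Gauss equations in the two ambient space forms give $c+\lambda_i\lambda_j=\tilde c+\mu_i\mu_j$, that is,
\begin{equation*}
\mu_i\mu_j-\lambda_i\lambda_j=c-\tilde c,\qquad 1\le i\neq j\le 3.
\end{equation*}
I would extract everything from these three scalar identities together with minimality of $f$ and, where needed, the Codazzi equations written in the common orthonormal frame $\{e_1,e_2,e_3\}$.

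Part (b) is then purely algebraic. Minimality gives $\lambda_1+\lambda_2+\lambda_3=0$, so with $\lambda_3=0$ we get $\lambda_2=-\lambda_1$. Because $\lambda_3=0$, the Gauss identities for the pairs $(1,3)$ and $(2,3)$ collapse to $\mu_1\mu_3=\mu_2\mu_3=c-\tilde c$; since $c\neq\tilde c$ this common value is nonzero, forcing $\mu_3\neq0$ and therefore $\mu_1=\mu_2=:\mu$, while $\mu\mu_3=c-\tilde c$ is exactly (\ref{e2}). The $(1,2)$ identity, using $\lambda_2=-\lambda_1$, becomes $\mu^2+\lambda_1^2=c-\tilde c$, which is (\ref{e1}). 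Finally $c-\tilde c=\mu^2+\lambda_1^2\ge 0$ and cannot vanish (else $c=\tilde c$), so $c>\tilde c$.

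For part (a), minimality with $\lambda_1=\lambda_2=\lambda$ gives at once $2\lambda+\lambda_3=0$, i.e. $\lambda_3=-2\lambda$, with $\lambda\neq0$ (otherwise $f$ is totally geodesic, the multiplicity-three case). Subtracting the Gauss identities for $(1,3)$ and $(2,3)$ yields $(\mu_1-\mu_2)\mu_3=0$. \emph{Once $\mu_3\neq 0$ is established} we obtain $\mu_1=\mu_2=:\mu$; then $\mu\neq0$, since $\mu=0$ would force, via the $(1,2)$ and $(1,3)$ identities, both $\lambda^2=\tilde c-c$ and $2\lambda^2=c-\tilde c$, hence $\lambda=0$ and $c=\tilde c$, a contradiction. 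The stated relation then drops out: eliminating $\lambda^2$ between $\mu^2-\lambda^2=c-\tilde c$ and $\mu\mu_3+2\lambda^2=c-\tilde c$ gives $\mu\mu_3+2\mu^2=3(c-\tilde c)$, i.e. $2\mu+\mu_3=\frac{3}{\mu}(c-\tilde c)$.

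The crux, and the step I expect to be the main obstacle, is excluding $\mu_3=0$ in part (a). Unlike in (b), here $\mu_3=0$ is compatible with the Gauss identities alone, which merely give $2\lambda^2=c-\tilde c$ and $\mu_1\mu_2=3\lambda^2$; so the Codazzi equations must enter. I would write $(\nabla_{e_i}S)e_j=(\nabla_{e_j}S)e_i$ for the shape operators of both $f$ and $\tilde f$ in the frame $\{e_1,e_2,e_3\}$, using that the repeated eigenvalue $\lambda$ forces $e_1(\lambda)=e_2(\lambda)=0$ and kills several connection coefficients coupling the $\lambda$-plane $\mathrm{span}\{e_1,e_2\}$ to $e_3$. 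Assuming $\mu_3=0$ and differentiating $\mu_1\mu_2=3\lambda^2$ along $e_3$ should force the remaining such coefficients to vanish, pinning $f$ down to a rigid, product-type configuration with $\lambda$ constant. This is delicate precisely because it is the borderline where constant-principal-curvature (isoparametric) examples live, so closing the case requires playing the Codazzi equations of the two immersions against each other rather than relying on Gauss alone; this is where I would concentrate the technical effort.
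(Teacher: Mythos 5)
Your proof of part (b) is complete and is exactly the paper's argument, and your algebra in part (a) --- minimality forcing $\lambda_3=-2\lambda$, the factorization $\mu_3(\mu_1-\mu_2)=0$ coming from the Gauss identities, the proof that $\mu\neq 0$, and the elimination of $\lambda^2$ giving $2\mu+\mu_3=\frac{3}{\mu}(c-\tilde c)$ --- is also the paper's. The genuine gap is the step you defer: you never exclude the branch $\mu_3=0$, you only sketch a Codazzi computation that ``should'' do it, and this branch is the only nontrivial content of part (a). Two remarks on how the paper handles it. First, half of the branch is still pure algebra, which you overlook: $\mu_3=0$ in the $(1,3)$ Gauss identity gives $c-\tilde c=2\lambda^2\geq 0$, so the case $c<\tilde c$ is closed by Gauss alone. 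Second, for $c>\tilde c$ the paper does not grind through Codazzi as you propose: it notes that $\lambda^2=(c-\tilde c)/2$ is then \emph{constant}, so $f$ has constant principal curvatures $\lambda,\lambda,-2\lambda$ and is isoparametric, and it invokes Cartan's identity \cite{CeRyBook}, which packages exactly the Gauss--Codazzi information you intended to extract by hand, to get $c+\lambda\lambda_3=0$; comparing with $c+\lambda\lambda_3=\tilde c+\mu_1\mu_3=\tilde c$ it concludes $\tilde c=0$ and declares this a contradiction. Recognizing isoparametricity and citing Cartan is the idea missing from your sketch.

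That said, your instinct that this branch is ``the borderline where constant-principal-curvature (isoparametric) examples live'' is correct, and in fact cuts deeper than the paper's own treatment: $\tilde c=0$ is not by itself a contradiction, and when $\tilde c=0$ the branch genuinely occurs. The minimal Clifford torus in $\Q^4(c)$, whose principal curvatures are $\lambda,\lambda,-2\lambda$ with $c=2\lambda^2$, is intrinsically the product $\Q^2(3c/2)\times\R$ locally, and this same metric is realized in $\R^4$ by the cylinder over any surface of constant curvature $3c/2$ in $\R^3$. Over the round sphere the cylinder has $\mu_1=\mu_2=\sqrt{3c/2}$ and $\mu_3=0$, so the conclusion of (a) holds even though the paper's argument would exclude this existing example; over a non-round (spindle-type) surface of the same constant curvature it has $\mu_1\neq\mu_2$ and $\mu_3=0$, and the conclusion of (a) fails. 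So the step you left open cannot be closed as stated: excluding $\mu_3=0$ is possible only under the extra hypothesis $\tilde c\neq 0$, and on this exceptional branch both your plan and the paper's own proof break down.
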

 \proof By the Gauss equations for $f$ and $\tilde f$, we have
 \begin{eqnarray}\label{gftildef} 
 c+\epsilon \lambda_i\lambda_j=\tilde c+\tilde \epsilon \mu_i\mu_j, \,\,\,1\leq i\neq j\leq 3.
 \end{eqnarray}
Moreover, as $f$ is minimal, we have 
\be \label{min}
\lambda_1 + \lambda_2+\lambda_3=0
\ee 
 $(a)$ If $\lambda_1=\lambda_2:=\lambda$, then the preceding equations are
 \be\label{c1}
 c+\lambda^2=\tilde c+ \mu_1\mu_2,
 \ee
 \be\label{c2}
 c + \lambda\lambda_3=\tilde c+ \mu_1\mu_3,
 \ee
 \be\label{c3}
 c+ \lambda\lambda_3=\tilde c+ \mu_2\mu_3
 \ee
and
 \be\label{c4}
 \lambda_3=-2\lambda.
 \ee
 The equations (\ref{c2}) and (\ref{c3}) yield $$\mu_3(\mu_1-\mu_2)=0,$$ hence either $\mu_3=0$ or $\mu_1=\mu_2$. In view of (\ref{c2}) and (\ref{c4}), the first possibility can not occur if $c < \tilde{c}$. Thus, in this case we must have $\mu_1=\mu_2:=\mu$, and then $c-\tilde c+ \lambda^2= \mu^2$ by (\ref{c1}) and, substituting (\ref{c4}) into (\ref{c2}), obtain $c-\tilde c - 2\lambda^2=\mu\mu_3$. Isolating $\lambda^2$ in this penultimate equation and substituting in the last one, it follows that $\mu \neq 0$ and $\mu_3=\dfrac{3}{\mu}(c-\tilde c) - 2 \mu.$
  
 Otherwise, either the same conclusion holds or 
 $\mu_3=0$ and, then, $c + \lambda\lambda_3=\tilde{c}$ by (\ref{c2}), and $c-\tilde c = 2\lambda^2$ by (\ref{c2}) and (\ref{c4}). From which it follows that $f$ is isoparametric. By Cartan's formula (see \cite[p.91]{CeRyBook}), we have $c+\lambda\lambda_3=0$, which leads us to a contradiction.\\
 
$(b)$ If $\lambda_3=0$, then is equations (\ref{gftildef}) and (\ref{min}) become
\be\label{d1}
 c-\tilde c -  \lambda_1^2= \mu_1\mu_2,
 \ee
 \be\label{d2}
 c-\tilde c= \mu_1\mu_3
 \ee
 and 
 \be\label{d3}
c-\tilde c= \mu_2\mu_3.
 \ee
 Since $\mu_3\neq 0$ by (\ref{d2}) or (\ref{d3}), these equations imply that  $\mu_1=\mu_2:=\mu$, and we obtain
  (\ref{e2}). From (\ref{d1}), we obtain $c-\tilde{c}=\lambda_1^2+\mu^2$ from which it follows that $c > \tilde{c}$. \qed

The theorem below gives us a solution of problem mentioned in the introduction of this paper, in cases where the hypersurface $f$ has a principal curvature with multiplicities or zero.

\begin{theorem}\label{thm:dual}
Let $f:M^3 \to \Q^4(c)$ be a minimal hypersurface for which there exists an isometric immersion $\tilde{f}:M^3 \to \Q^4(\tilde{c})$ with $c \neq \tilde{c}$.

\begin{enumerate}
    \item [a)] If $f$ has a principal curvature of multiplicity three, then $c>\tilde c$, $f$ is a totally geodesic and $\tilde f$ is a umbilic non totally geodesic.
    \item [b)] Assume that $f$ has a principal curvature of multiplicity two, then $f$ is a rotation hypersurface whose profile curve is a $c\delta-$catenary in a totally geodesic surface $\Q^2(c)$ of $\Q^4(c)$ and $\tilde{f}$ is a rotation hypersurface whose profile curve is a $c\tilde{\delta}-$catenary in a totally geodesic surface $\Q^2(\tilde{c})$ of $\Q^4(\tilde{c})$.

    \item [c)] If one of the principal curvatures of $f$ is zero, then $c > \tilde{c}$, $f$ is a generalized cone over a minimal surface in an umbilical hypersurface $\Q^3(\bar{c})$ of $\Q^4(c)$, $\bar{c} \geq c$, and $\tilde{f}$ is a rotation hypersurface whose profile curve is a $c-$helix in a totally geodesic surface $\Q^2(\tilde{c})$ of $\Q^4(\tilde{c})$.
\end{enumerate}
\end{theorem}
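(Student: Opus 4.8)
The plan is to handle the three cases separately, in each case starting from the pointwise relations among the principal curvatures --- supplied by Lemma~\ref{multdois} in cases (b) and (c) --- and then promoting this infinitesimal data to a global geometric description by means of the Codazzi equations together with the do Carmo--Dajczer theory of rotation hypersurfaces \cite{dcd}.

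For (a), a principal curvature of multiplicity three means $\lambda_1=\lambda_2=\lambda_3=\lambda$, so minimality (\ref{min}) forces $\lambda=0$ and $f$ is totally geodesic. Substituting $\lambda_i=0$ into the Gauss equations (\ref{gftildef}) gives $\mu_i\mu_j=c-\tilde c$ for every $i\neq j$; comparing two of these yields $\mu_i(\mu_j-\mu_k)=0$, and since no $\mu_i$ may vanish (otherwise $c=\tilde c$), all the $\mu_i$ coincide. Hence $\tilde f$ is umbilic with $\mu^2=c-\tilde c$, which simultaneously gives $c>\tilde c$ and that $\tilde f$ is umbilic but not totally geodesic. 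This case is immediate.

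For (b), Lemma~\ref{multdois}(a) shows that the principal curvatures of $f$ are $\lambda,\lambda,-2\lambda$ with $\lambda\neq 0$ (so the multiplicity is exactly two) and those of $\tilde f$ are $\mu,\mu,\mu_3$ with $\mu\neq 0$. The central step is to prove that a hypersurface carrying a principal curvature of constant multiplicity two is a rotation hypersurface: I would use the Codazzi equation to show that the two-dimensional eigendistribution $E_\lambda$ is integrable with totally umbilical leaves, that $\lambda$ is constant along each leaf, and that $f$ maps the leaves onto round spheres contained in totally geodesic $\Q^3(c)$'s; this is precisely the local characterization of rotation hypersurfaces in \cite{dcd}. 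Applying the same reasoning to the double eigenvalue $\mu$ of $\tilde f$ shows $\tilde f$ is a rotation hypersurface as well. Finally, by \cite[Corollary 4.4]{dcd} the minimality relation $\lambda_3=-2\lambda$ is equivalent to the profile curve of $f$ being a $c\delta$-catenary; since $f$ and $\tilde f$ induce the same first fundamental form, the profile curve of $\tilde f$ has the same height function and therefore satisfies the same catenary equation, yielding a $c\tilde\delta$-catenary in $\Q^2(\tilde c)$.

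For (c), Lemma~\ref{multdois}(b) already delivers $c>\tilde c$, $\mu_1=\mu_2=\mu$ and $\mu\mu_3=c-\tilde c$. Here the direction $e_3$ with $\lambda_3=0$ spans the relative nullity distribution of $f$, of constant rank one; I would use Codazzi to show it is integrable and that $f$ carries its leaves to geodesics of $\Q^4(c)$, so $f$ is ruled by geodesics. Identifying the orthogonal cross-section and checking that it lies in an umbilical hypersurface $\Q^3(\bar c)$ with $\bar c\geq c$, on which minimality of $f$ (i.e. $\lambda_1+\lambda_2=0$) makes it a minimal surface, exhibits $f$ as a generalized cone in the sense defined above. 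The double eigenvalue $\mu$ again makes $\tilde f$ a rotation hypersurface as in (b); the induced metric of a generalized cone is a warped product $dt^2+\rho(t)^2\,ds^2$ whose warping function solves the Jacobi equation $\rho''+c\rho=0$ along the ruling geodesics of $\Q^4(c)$, and matching this with the rotation-hypersurface metric identifies the height function of the profile curve of $\tilde f$ with $\rho$, i.e. a $c$-helix.

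The main obstacle in (b) and (c) is exactly this passage from the pointwise eigenvalue structure to the global rigidity statement: establishing integrability and the precise geometry of the eigenleaves (spherical in (b), geodesic in (c)), and then matching the shared first fundamental form against the standard rotation-hypersurface and generalized-cone metrics to extract the exact profile-curve equation. The normalizations of Lemma~\ref{multdois} and the machinery of \cite{dcd} do most of the work, but verifying that these metric identifications produce precisely the catenary parameter $r=c$ and the helix equation $\gamma_v''+c\gamma_v=0$ is the delicate point.
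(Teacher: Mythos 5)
Your proposal is correct in outline and follows the same skeleton as the paper's proof: the same three-case decomposition, Lemma~\ref{multdois} as the algebraic input, and the do Carmo--Dajczer machinery of \cite{dcd} for the rotation-hypersurface structure (the paper likewise dismisses case (a) as trivial, and your elaboration of it is fine). The genuine divergence is in the final step, the one you yourself flag as delicate. The paper never matches first fundamental forms: in case (b) it substitutes the principal-curvature formulas for rotation hypersurfaces (Proposition 3.2 of \cite{dcd}) into the first and third relations of (\ref{mult2dem}), and in case (c) into $c-\tilde c=\mu\mu_3$, reading off the catenary and helix ODEs directly. Observe that your route never uses the relation $\mu_3=\frac{3}{\mu}(c-\tilde c)-2\mu$, which is precisely what the paper needs to pin down the profile curve of $\tilde f$; this matters because $\tilde f$ is not minimal, so \cite[Corollary 4.4]{dcd} applies only to $f$, and some substitute input is required on the $\tilde f$ side. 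Your substitute --- transferring the height function through the shared first fundamental form --- can be made rigorous, but it rests on the uniqueness of the warped-product decomposition of the metric and on identifying the warping function with the height function in the parametrization of \cite{dcd} (including the $\delta$ versus $\tilde\delta$ bookkeeping), none of which you verify; the paper's substitution into Proposition 3.2 sidesteps all of this. Similarly, in case (c) the paper obtains the generalized-cone structure by invoking the proof of Theorem 4 of \cite{ct} rather than redoing the rank-one relative-nullity argument, though your sketch is essentially the content of that proof. In sum: your plan buys a more self-contained geometric picture, while the paper's buys short, purely algebraic verifications of the profile-curve equations at the cost of leaning on citations; to close your version you would still need to prove the warped-product identification, or else fall back on the paper's use of Proposition 3.2 together with the discarded relation for $\mu_3$.
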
 
\begin{proof} The case $a)$ is trivial. Assume that  $f$ has a principal curvature of multiplicity two, say, $\lambda_1=\lambda_2:=\lambda$. Then,  it follows from Lemma \ref{multdois} that
\begin{eqnarray}\label{mult2dem}
    \lambda_3=-2\lambda,\,\,\,\, \mu_1=\mu_2:= \mu \neq 0\,\,\,\,  \mu_3=\dfrac{3}{\mu}(c-\tilde c) - 2 \mu.
 \end{eqnarray}   
 Where follows from Theorem 4.2 in \cite{dcd} that $f$ and $\tilde{f}$ are rotation hypersurfaces. Moreover, using the principal curvatures given in Proposition 3.2 in \cite{dcd}, it is shown that the first and third equations in (\ref{mult2dem}) are satisfied if and only if the profiles curves the $f$ and $\tilde f$ are $c\delta-$catenary and $c\tilde{\delta}-$catenary, respectively, in $\Q^2(c)$ and $\Q^2(\tilde{c}).$ 

Finally, if one of the principal curvatures of $f$ is zero then,  it follows from Lemma \ref{multdois} that
$c>\tilde{c}$, $\mu_1=\mu_2:= \mu$, $c-\tilde{c}-\lambda_1^2=\mu^2$ and $c-\tilde{c}=\mu\mu_3.$ Now, proceeding as in the proof of Theorem 4 in \cite{ct} that $f(M^3)$ is contained in a generalized cone over a surface with constant curvature in an umbilical hypersurface $\Q^3(\bar{c})$ of $\Q^4(c)$, with $\bar{c} > c$. On the other hand, follow the Theorem 4.2 in \cite{dcd}  that $\tilde{f}$ is a rotation hypersurface. Moreover, using the principal curvatures given in Proposition 3.2 in \cite{dcd}, it is shown that the equation $c-\tilde{c}=\mu\mu_3$ is satisfied if and only if the profile curve is a $c-$ helix in $\Q^2(\tilde{c}).$
\end{proof}

We are now in a position to prove the main theorem.\\

\noindent \emph{Proof of Theorem \ref{main1}}:  
Follow the Theorems \ref{thm:main} and \ref{thm:dual}.  \qed

\section{Appendix A}

\begin{equation*}
\begin{array}{rlll}
F_1 \!\! &=& \!\!\! F_{11} \alpha_1^2 + F_{12} \alpha_2^2 + F_{13} \alpha_3^2 + F_{14} + F_{15}; \hspace{8cm} \vspace{.2cm}\\
F_2 \!\! &=& \!\!\! F_{21} \alpha_1^2 + F_{22} \alpha_2^2 + F_{23} \alpha_3^2 + F_{24} + F_{25}; \vspace{.2cm}\\
F_3 \!\! &=& \!\!\! F_{31} \alpha_1^2 + F_{32} \alpha_2^2 + F_{33} \alpha_3^2 + F_{34} + F_{35};
\end{array}
\end{equation*}

\begin{equation*}
\begin{array}{rll}
F_{11} \!\!\! &=& \!\!\! - \ 3 v_1^2  \phi_2 \phi_3 (v_2^2 + v_3^2) (v_2^2 + v_3^2) (2 v_1^2 - 7 v_1^4 + 8 v_1^6 - 3 v_1^8 - 2 v_2^2 + 17 v_1^2 v_2^2 \hspace{3cm}\\
       && \!\!\! - \ 15 v_1^4 v_2^2 - 45 v_1^6 v_2^2 + 45 v_1^8 v_2^2 - 2 v_2^4 +  15 v_1^2 v_2^4 - 45 v_1^6 v_2^4);\\
     \end{array}
\end{equation*}
\begin{equation*}
\begin{array}{rll}
F_{12} \!\!\! &=& \!\!\! - \ 3 v_1^2 \phi_1^2 \phi_3 (v_2^2 + v_3^2) (2 v_1^2 - 8 v_1^4 + 6 v_1^6 - 2 v_2^2 + 21 v_1^2 v_2^2 - 53 v_1^4 v_2^2  \hspace{3cm}\\
       && \!\!\! + \ 30 v_1^6 v_2^2 -  5 v_2^4 + 34 v_1^2 v_2^4 - 45 v_1^4 v_2^4 - 3 v_2^6 + 15 v_1^2 v_2^6);\\
     \end{array}
\end{equation*}
\begin{equation*}
\begin{array}{rll}
F_{13} \!\!\! &=& \!\!\! 3 v_1^2 \phi_1^2 \phi_2 (v_2^2 + v_3^2) (-v_1^2 + 2 v_1^4 - v_1^6 +  v_2^2 - 6 v_1^2 v_2^2 - 6 v_1^4 v_2^2 + 15 v_1^6 v_2^2  \hspace{3cm}\\
       && \!\!\! + \ 4 v_2^4 - 20 v_1^2 v_2^4 + 3 v_2^6 - 15 v_1^2 v_2^6);\\
\end{array}
\end{equation*}
\begin{equation*}
\begin{array}{rll}
F_{14} \!\!\! &=& \!\!\! V_1^2 (-5 v_1^4 + 13 v_1^6 + 7 v_1^8 - 33 v_1^{10} + 18 v_1^{12} + 4 v_2^2 - 48 v_1^2 v_2^2 + 178 v_1^4 v_2^2 \hspace{3cm}\\
       && \!\!\! - \ 428 v_1^6 v_2^2   + 780 v_1^8 v_2^2 - 756 v_1^{10} v_2^2 + 270 v_1^{12} v_2^2 + 17 v_2^4 - 169 v_1^2 v_2^4 \\
       && \!\!\! + \ 408 v_1^4 v_2^4 - 384 v_1^6 v_2^4 + 279 v_1^8 v_2^4 - 135 v_1^{10} v_2^4 + 26 v_2^6 - 224 v_1^2 v_2^6 \\
       && \!\!\! + \ 324 v_1^4 v_2^6 + 144 v_1^6 v_2^6 - 270 v_1^8 v_2^6 + 13 v_2^8 - 99 v_1^2 v_2^8 + 63 v_1^4 v_2^8 \\
       && \!\!\! + \ 135 v_1^6 v_2^8);\\
\end{array}
\end{equation*}
\begin{equation*}
\begin{array}{rll}
F_{15} \!\!\! &=& \!\!\! -c \, v_1^2 (v_2^2+v_3^2)^2 (-5 v_1^2 + 8 v_1^4 + 15 v_1^6 - 18 v_1^8 + 5 v_2^2 - 77 v_1^2 v_2^2 \hspace{3cm}\\
    && \!\!\! + \ 225 v_1^4 v_2^2 - 207 v_1^6 v_2^2 + 
    54 v_1^8 v_2^2 + 5 v_2^4 - 72 v_1^2 v_2^4 + 153 v_1^4 v_2^4 - 
    54 v_1^6 v_2^4)\\
\end{array}
\end{equation*}
\begin{equation*}
\begin{array}{rll}     
F_{21} \!\!\! &=& \!\!\! - \ 3 v_1^2  \phi_2^2 \phi_3 (v_2^2 + v_3^2)^2 (v_2^2 + 2 v_2^4 + v_2^6 - v_3^2 - 6 v_2^2 v_3^2 + 6 v_2^4 v_3^2 + 15 v_2^6 v_3^2 \hspace{3cm} \\
       && \!\!\! + \ 4 v_3^4 + 20 v_2^2 v_3^4 - 3 v_3^6 - 15 v_2^2 v_3^6);\\
     \end{array}
\end{equation*}
\begin{equation*}
\begin{array}{rll} 
F_{22} \!\!\! &=& \!\!\! 3 v_1^2 \phi_1 \phi_3 (v_1^2 - v_3^2) (v_2^2 + v_3^2)^2 (-2 v_2^2 - 7 v_2^4 - 8 v_2^6 - 3 v_2^8 + 2 v_3^2 + 17 v_2^2 v_3^2 \hspace{3cm}\\
       && \!\!\! + \ 15 v_2^4 v_3^2 - 45 v_2^6 v_3^2 - 45 v_2^8 v_3^2 - 2 v_3^4 - 15 v_2^2 v_3^4 + 45 v_2^6 v_3^4);\\
     \end{array}
\end{equation*}
\begin{equation*}
\begin{array}{lll} 
 F_{23} \!\!\! &=& \!\!\!  3  v_1^2 \phi_1 \phi_2^2(v_2^2 + v_3^2)^2 (-2 v_2^2 - 8 v_2^4 - 6 v_2^6 + 2 v_3^2 + 21 v_2^2 v_3^2 + 53 v_2^4 v_3^2 \hspace{3cm} \\
       && \!\!\! + \ 30 v_2^6 v_3^2 - 5 v_3^4 - 34 v_2^2 v_3^4 - 45 v_2^4 v_3^4 + 3 v_3^6 + 15 v_2^2 v_3^6);\\
     \end{array}
\end{equation*}
\begin{equation*}
\begin{array}{rll} 
F_{24} \!\!\! &=& \!\!\! V_1^2 (v_1^2 - v_3^2)(5 v_2^4 + 13 v_2^6 - 7 v_2^8 - 33 v_2^{10} - 18 v_2^{12} + 4 v_3^2 + 48 v_2^2 v_3^2 \hspace{3cm} \\
       && \!\!\! + \ 178 v_2^4 v_3^2 + 428 v_2^6 v_3^2 + 780 v_2^8 v_3^2 + 756 v_2^{10} v_3^2 + 270 v_2^{12} v_3^2 - 17 v_3^4 \\
       && \!\!\! - \ 169 v_2^2 v_3^4 - 408 v_2^4 v_3^4 - 384 v_2^6 v_3^4 - 279 v_2^8 v_3^4 - 135 v_2^{10} v_3^4 + 26 v_3^6 \\
       && \!\!\! + \ 224 v_2^2 v_3^6 + 324 v_2^4 v_3^6 - 144 v_2^6 v_3^6 - 270 v_2^8 v_3^6 - 13 v_3^8 - 99 v_2^2 v_3^8 \\
       && \!\!\! - \ 63 v_2^4 v_3^8 + 135 v_2^6 v_3^8);\\
\end{array}
\end{equation*} 
\begin{equation*}
\begin{array}{rll}
F_{25} \!\!\! &=& \!\!\!  -c \, v_1^2 (v_1^2 - v_3^2) (v_2^2 + v_3^2)^2 (-5 v_2^2 - 8 v_2^4 + 15 v_2^6 + 18 v_2^8 + 5 v_3^2 + 77 v_2^2 v_3^2 \hspace{3cm}\\
& & \!\!\! + \ 225 v_2^4 v_3^2 + 207 v_2^6 v_3^2 + 54 v_2^8 v_3^2 - 5 v_3^4 - 72 v_2^2 v_3^4 - 153 v_2^4 v_3^4 - 54 v_2^6 v_3^4)  \\
\end{array}
\end{equation*}    
\begin{equation*}
\begin{array}{rll}     
F_{31} \!\!\! &=& \!\!\! 3 v_1^2  \phi_2 \phi_3^2 (v_2^2 + v_3^2)^2 (-2 v_1^2 + 5 v_1^4 - 3 v_1^6 - 2 v_3^2 + 21 v_1^2 v_3^2 - 34 v_1^4 v_3^2 \hspace{3cm}\\
       && \!\!\! + \ 15 v_1^6 v_3^2 + 8 v_3^4 - 53 v_1^2 v_3^4 + 45 v_1^4 v_3^4 - 6 v_3^6 + 30 v_1^2 v_3^6);\\
     \end{array}
\end{equation*}
\begin{equation*}
\begin{array}{rll}
F_{32} \!\!\! &=& \!\!\! 3 v_1^2 \phi_1 \phi_3^2 (v_2^2 + v_3^2)^2 (-v_1^2 + 4 v_1^4 - 3 v_1^6 - v_3^2 + 6 v_1^2 v_3^2 - 20 v_1^4 v_3^2 + 15 v_1^6 v_3^2 \hspace{3cm} \vspace{.2cm}\\ 
       && \!\!\! + \ 2 v_3^4 + 6 v_1^2 v_3^4 - v_3^6 - 15 v_1^2 v_3^6) ;\\
     \end{array}
\end{equation*}
\begin{equation*}
\begin{array}{rll}
F_{33} \!\!\! &=& \!\!\! - \ 3  v_1^2 \phi_1 \phi_2 (v_1^2 + v_2^2) (v_2^2 + v_3^2)^2 (-2 v_1^2 + 2 v_1^4 - 2 v_3^2 + 17 v_1^2 v_3^2 - 15 v_1^4 v_3^2 \hspace{3cm}\\
       && \!\!\! + \ 7 v_3^4 - 15 v_1^2 v_3^4 - 8 v_3^6 - 45 v_1^2 v_3^6 + 45 v_1^4 v_3^6 + 3 v_3^8 + 45 v_1^2 v_3^8);\\
     \end{array}
\end{equation*}
\begin{equation*}
\begin{array}{rll}
F_{34} \!\!\! &=& \!\!\! - \ V_1^2 (v_1^2 + v_2^2) (-4 v_1^2 + 17 v_1^4 - 26 v_1^6 + 13 v_1^8 + 48 v_1^2 v_3^2 - 169 v_1^4 v_3^2 \hspace{3cm}\\
       && \!\!\! + \ 224 v_1^6 v_3^2 - 99 v_1^8 v_3^2 - 5 v_3^4 - 178 v_1^2 v_3^4 + 408 v_1^4 v_3^4 - 324 v_1^6 v_3^4 + 63 v_1^8 v_3^4 \\
       && \!\!\! + \ 13 v_3^6 + 428 v_1^2 v_3^6 - 384 v_1^4 v_3^6 - 144 v_1^6 v_3^6 + 135 v_1^8 v_3^6 + 7 v_3^8 - 780 v_1^2 v_3^8 \\
       && \!\!\! + \ 279 v_1^4 v_3^8 + 270 v_1^6 v_3^8 - 33 v_3^{10} + 756 v_1^2 v_3^{10} - 135 v_1^4 v_3^{10} + 18 v_3^{12} \\
       && \!\!\! - \ 270 v_1^2 v_3^{12}) ;
\end{array}
\end{equation*}
\begin{equation*}
\begin{array}{rll}
F_{35} \!\!\! &=& \!\!\! -c \, v_1^2 (v_1^2 + v_2^2) (v_2^2 + v_3^2)^2 (5 v_1^2 - 5 v_1^4 + 5 v_3^2 - 77 v_1^2 v_3^2 + 72 v_1^4 v_3^2 - 8 v_3^4 \hspace{3cm}\\
&& \!\!\! + \, 225 v_1^2 v_3^4 - 153 v_1^4 v_3^4 - 15 v_3^6 - 207 v_1^2 v_3^6 + 54 v_1^4 v_3^6 + 18 v_3^8 + 54 v_1^2 v_3^8)   \\
\end{array}
\end{equation*}

{ \fontsize{9.2pt}{\baselineskip}\selectfont
\begin{equation*}
\begin{array}{lll}
R_1 =  v_1^2 (-1 + 3 v_1^2) (1 + 3 v_2^2) (1 - 3 v_3^2) (57 v_1^6 -  474 v_1^8 + 1620 v_1^{10} - 2910 v_1^{12} + 2895 v_1^{14} - 1512 v_1^{16} \vspace{.2cm}\\ 
\hspace{.8cm} + \, 324 v_1^{18} - v_1^4 v_2^2 + 1173 v_1^6 v_2^2 - 9096 v_1^8 v_2^2 + 28966 v_1^{10} v_2^2 - 48345 v_1^{12} v_2^2 + 44529 v_1^{14} v_2^2\vspace{.2cm}\\ 
\hspace{.8cm} - \, 21438 v_1^{16} v_2^2 + 4212 v_1^{18} v_2^2 + 15 v_1^2 v_2^4 - 111 v_1^4 v_2^4 + 10017 v_1^6 v_2^4 - 70074 v_1^8 v_2^4 + 203709 v_1^{10} v_2^4 \vspace{.2cm}\\ 
\hspace{.8cm} - \, 308985 v_1^{12} v_2^4 + 256473 v_1^{14} v_2^4 -   109998 v_1^{16} v_2^4 + 18954 v_1^{18} v_2^4 - 7 v_2^6 + 231 v1^2 v_2^6 - 1135 v_1^4 v_2^6 \vspace{.2cm}\\ 
\hspace{.8cm} + \, 45029 v_1^6 v_2^6 - 280311 v_1^8 v_2^6 +  728529 v_1^{10} v_2^6 - 975825 v_1^{12} v_2^6 + 703323 v_1^{14} v_2^6 -  256770 v_1^{16} v_2^6 \vspace{.2cm}\\ 
\hspace{.8cm} + \, 36936 v_1^{18} v_2^6 - 51 v_2^8 + 1167 v_1^2 v_2^8 -  4313 v_1^4 v_2^8 + 117744 v_1^6 v_2^8 - 646329 v_1^8 v_2^8 +  1455723 v_1^{10} v_2^8 \vspace{.2cm}\\ 
\hspace{.8cm} - \, 1637577 v_1^{12} v_2^8 + 945432 v_1^{14} v_2^8 -  254178 v_1^{16} v_2^8 + 21870 v_1^{18} v_2^8 - 141 v_2^{10} + 2721 v_1^2 v_2^{10} \vspace{.2cm}\\ 
\hspace{.8cm} - \, 7452 v_1^4 v_2^{10} + 187842 v_1^6 v_2^{10} -  897957 v_1^8 v_2^{10} + 1677429 v_1^{10} v_2^{10} - 1471770 v_1^{12} v_2^{10} \vspace{.2cm}\\ 
\hspace{.8cm} + \, 596808 v_1^{14} v_2^{10} - 87480 v_1^{16} v_2^{10} - 187 v_2^{12} + 3240 v_1^2 v_2^{12} - 5964 v_1^4 v_2^{12} + 184050 v_1^6 v_2^{12} \vspace{.2cm}\\ 
\hspace{.8cm} - \, 741069 v_1^8 v_2^{12} + 1065474 v_1^{10} v_2^{12} - 634716 v_1^{12} v_2^{12} + 131220 v_1^{14} v_2^{12} - 120 v_2^{14} + 1920 v_1^2 v_2^{14} \vspace{.2cm}\\ 
\hspace{.8cm} - \, 1800 v_1^4 v_2^{14} + 103680 v_1^6 v_2^{14} - 327240 v_1^8 v_2^{14} +  311040 v_1^{10} v_2^{14} - 87480 v_1^{12} v_2^{14} - 30 v_2^{16} \vspace{.2cm}\\ 
\hspace{.8cm} + \,  450 v_1^2 v_2^{16} + 25920 v_1^6 v_2^{16} - 55890 v_1^8 v_2^{16} +  21870 v_1^{10} v_2^{16})
\end{array}
\end{equation*}
\begin{equation*}
\begin{array}{l}
R_2 = 106 v_1^8 - 1127 v_1^{10} + 5395 v_1^{12} - 15190 v_1^{14} + 27362 v_1^{16} - 31999 v_1^{18} + 23499 v_1^{20} - 9828 v_1^{22} \vspace{.2cm}\\ 
\hspace{.8cm} + \, 1782 v_1^{24} + 14 v_1^6 v_2^2 + 2074 v_1^8 v_2^2 - 21134 v_1^{10} v_2^2 +  94348 v_1^{12} v_2^2 - 246178 v_1^{14} v_2^2 + 410456 v_1^{16} v_2^2 \vspace{.2cm}\\ 
\hspace{.8cm} - \, 444186 v_1^{18} v_2^2 + 301428 v_1^{20} v_2^2 - 116100 v_1^{22} v_2^2 + 19278 v_1^{24} v_2^2 + 30 v_1^4 v_2^4 - 50 v_1^6 v_2^4 + 18800 v_1^8 v_2^4 \vspace{.2cm}\\ 
\hspace{.8cm} - \, 168840 v_1^{10} v_2^4 + 669656 v_1^{12} v_2^4 - 1544558 v_1^{14} v_2^4 + 2264802 v_1^{16} v_2^4 - 2140020 v_1^{18} v_2^4 \vspace{.2cm}\\ 
\hspace{.8cm} + \, 1250910 v_1^{20} v_2^4 - 404676 v_1^{22} v_2^4 + 53946 v_1^{24} v_2^4 - 30 v_1^2 v_2^6 + 1136 v_1^4 v_2^6 - 9324 v_1^6 v_2^6 \vspace{.2cm}\\ 
\hspace{.8cm} + \, 144902 v_1^8 v_2^6 - 928388 v_1^{10} v_2^6 + 2980324 v_1^{12} v_2^6 - 5634936 v_1^{14} v_2^6 + 6707592 v_1^{16} v_2^6 \vspace{.2cm}\\ 
\hspace{.8cm} - \, 5034366 v_1^{18} v_2^6 + 2239164 v_1^{20} v_2^6 - 502524 v_1^{22} v_2^6 + 36450 v_1^{24} v_2^6 +  8 v_2^8 - 559 v_1^2 v_2^8 \vspace{.2cm}\\ 
\hspace{.8cm} + \, 13031 v_1^4 v_2^8 - 108876 v_1^6 v_2^8 + 871930 v_1^8 v_2^8 - 3971216 v_1^{10} v_2^8 + 9961902 v_1^{12} v_2^8 - 14762808 v_1^{14} v_2^8 \vspace{.2cm}\\ 
\hspace{.8cm} + \, 13332978 v_1^{16} v_2^8 - 7134885 v_1^{18} v_2^8 + 2001591 v_1^{20} v_2^8 - 204120 v_1^{22} v_2^8 + 62 v_2^{10} -  3254 v_1^2 v_2^{10} \vspace{.2cm}\\ 
\hspace{.8cm} + \, 63546 v_1^4 v_2^{10} - 505742 v_1^6 v_2^{10} + 3147158 v_1^8 v_2^{10} - 11432058 v_1^{10} v_2^{10} +  23188734 v_1^{12} v_2^{10} \vspace{.2cm}\\ 
\hspace{.8cm} - \, 27073386 v_1^{14} v_2^{10} + 18102852 v_1^{16} v_2^{10} - 6362712 v_1^{18} v_2^{10} + 874800 v_1^{20} v_2^{10} +  198 v_2^{12} \vspace{.2cm}\\ 
\hspace{.8cm} - \, 9198 v_1^2 v_2^{12} + 162680 v_1^4 v_2^{12} - 1217554 v_1^6 v_2^{12} + 6586536 v_1^8 v_2^{12} - 20185578 v_1^{10} v_2^{12} \vspace{.2cm}\\ 
\hspace{.8cm} + \, 33479568 v_1^{12} v_2^{12} - 30142206 v_1^{14} v_2^{12} + 13875786 v_1^{16} v_2^{12} - 2536920 v_1^{18} v_2^{12} + 332 v_2^{14} \vspace{.2cm}\\ 
\hspace{.8cm} - \, 14380 v_1^2 v_2^{14} + 237416 v_1^4 v_2^{14} - 1658148 v_1^6 v_2^{14} + 8098920 v_1^8 v_2^{14} - 21099636 v_1^{10} v_2^{14} \vspace{.2cm}\\ 
\hspace{.8cm} + \, 27662472 v_1^{12} v_2^{14} - 17469756 v_1^{14} v_2^{14} + 4242780 v_1^{16} v_2^{14} + 308 v_2^{16} - 12737 v_1^2 v_2^{16} \vspace{.2cm}\\ 
\hspace{.8cm} + \, 199167 v_1^4 v_2^{16} - 1286820 v_1^6 v_2^{16} + 5770710 v_1^8 v_2^{16} - 12460149 v_1^{10} v_2^{16} + 11744919 v_1^{12} v_2^{16} \vspace{.2cm}\\ 
\hspace{.8cm} - \, 3958470 v_1^{14} v_2^{16} + 150 v_2^{18} - 6000 v_1^2 v_2^{18} + 89550 v_1^4 v_2^{18} - 529200 v_1^6 v_2^{18} + 2207250 v_1^8 v_2^{18} \vspace{.2cm}\\ 
\hspace{.8cm} - \, 3693600 v_1^{10} v_2^{18} + 1931850 v_1^{12} v_2^{18} + 30 v_2^{20} - 1170 v_1^2 v_2^{20} + 16740 v_1^4 v_2^{20} - 89100 v_1^6 v_2^{20}\vspace{.2cm}\\ 
\hspace{.8cm}  + \, 352350 v_1^8 v_2^{20} - 386370 v_1^{10} v_2^{20}
\end{array}
\end{equation*}}

%

{\renewcommand{\baselinestretch}{1}
\vspace*{1cm}\begin{tabular}{ll}
Universidade Federal de Alagoas & Universidade Federal de Sergipe \\
Instituto de Matem\'atica & Departamento de Matem\'atica\\
57072-900 -- Macei\'o -- AL -- Brazil & 49100-000 -- S\~ao Crist\'ov\~ao -- SE -- Brazil \\
E-mail: carlos.filho@im.ufal.br & E-mail: samuel@mat.ufs.br
\end{tabular} }


\end{document}